%
%
%

\documentclass{article}
\usepackage{geometry}

   \geometry{a4paper, total={170mm,257mm}, left=15mm, top=20mm}

\usepackage{authblk}
\usepackage{type1cm}        
%
\usepackage{makeidx}         
\usepackage{graphicx}        
\usepackage{multicol}        
\usepackage[bottom]{footmisc}



\usepackage{cite}
\usepackage{amsmath,mathrsfs}
\usepackage{algorithmic}
\usepackage{graphicx}
\usepackage{textcomp}
\usepackage{lipsum}
\usepackage{stfloats}
\PassOptionsToPackage{dvipsnames,svgnames,table}{xcolor}

\usepackage{microtype}
\usepackage{multicol}

\usepackage{bigints}
\usepackage{mathrsfs}
\usepackage{mathtools,amsthm,amssymb}
\usepackage{standalone}

\usepackage{color}

\usepackage{multirow}
\usepackage{upgreek}
\usepackage{accents}

\newcommand{\reals}{\mathbb{R}}

\newcommand{\A}{\mathrm{A}}
\newcommand{\B}{\mathrm{B}}
\newcommand{\Dmatrix}{\mathrm{D}}
\newcommand{\K}{\mathrm{K}}

\newcommand{\Ku}{\mathrm{K}}
\newcommand{\C}{\mathrm{C}}

\newcommand{\G}{\mathrm{G}}

\newcommand{\J}{\mathrm{J}}
\newcommand{\Lm}{\mathrm{L}}
\newcommand{\V}{\mathrm{V}}
\newcommand{\M}{\mathrm{M}}
\newcommand{\Mp}{\mathrm{M}^{\dagger}}
\newcommand{\N}{\mathrm{N}}
\newcommand{\Pp}{\mathrm{P}}
\newcommand{\W}{\mathrm{W}}
\newcommand{\Y}{\mathrm{Y}}
\newcommand{\Z}{\mathrm{Z}}
\newcommand{\Tm}{\mathrm{T}}
\newcommand{\Ss}{\mathrm{S}}
\newcommand{\R}{\mathrm{R}}
\newcommand{\Ematrix}{\mathrm{E}}

\newcommand{\Lap}{\mathcal{L}}

\newcommand{\Imm}{\mathrm{I}}

\newcommand{\T}{\top}
\newtheorem{theorem}{Theorem}

\newtheorem{lem}{Lemma}
\newtheorem{assump}{Assumption}
\newtheorem{remark}{Remark}
\newtheorem{prop}{Proposition}

\newcommand*\xbar[1]{%
   \hbox{%
     \vbox{%
       \hrule height 0.7pt 
       \kern0.35ex
       \hbox{%
         \kern-0.1em
         \ensuremath{#1}%
         \kern-0.1em
       }%
     }%
   }%
}


\title{Dynamic allocation function design in the presence of magnitude saturating inputs}

\author[1]{Thiago Alves Lima}
\author[2]{Sophie Tarbouriech}
\affil[1]{Université Paris-Saclay, CNRS, CentraleSupélec, Laboratoire des Signaux et Systèmes, 91190, Gif-sur-Yvette, France}
\affil[2]{LAAS-CNRS, Université de Toulouse, CNRS, Toulouse, France}

\begin{document}
\date{}
\maketitle

\abstract{This chapter deals with the design of dynamic allocation functions for systems with saturating actuators. The goal of the allocator consists in redistributing the desired control effort within the multiple actuators by penalizing each actuator to be more or less used, while also taking into account a criterion for minimization of their total energy consumption over time. Anti-windup gains are added to both the controller and the dynamic allocator  
to deal with the saturation condition. Two cases are considered: the plant is affected by bounded disturbance and the influence matrix is supposed to be affected by uncertainty. Convex conditions for the co-design of both the dynamic allocator and anti-windup gains are then expressed in the form of linear matrix inequalities (LMIs). Such conditions allow to deal with the multiple objective problems of enlarging the estimates of the basin of attraction,  minimizing the total energy consumption of the actuators and maximizing the size of the admissible disturbance. The satellite formation problem borrowed from the literature is revised to illustrate the proposed technique and show its effectiveness in both cases (perturbed system and robust case).}

\section{Introduction}
\label{intro}

Control allocation is an essential issue when dealing with over-actuated systems and aims at applying some dedicated algorithm to distribute the computed control effort throughout the multiple actuators that together drive the plant states and/or outputs. Numerous applications embed the control allocation problem, for example, when the plant includes torques and/or forces as inputs generated by a set of multiple actuators that jointly produce the desired control effort: a classical example corresponds to microthrusters in space applications.  The advantages of the control allocation approach are modularity and ability to handle constraints (see, for example, \cite{JOHANSEN_2013,Johansen2008}).

Several papers deal with the control allocation problem from a specific application point of view, in particular in the aeronautical or spatial contexts: see, for example,  \cite{Jin2005}, \cite{Oppenheimer2006}, \cite{Boada2013}, \cite{Durham2016}. Of course, the literature also addresses technical solutions from a theoretical point of view. For example, in  \cite{GALEANI2015} and \cite{Serrani2012}  the output regulation problem of over-actuated systems in the presence of full information regarding the system states and exogenous inputs is studied. In particular, \cite{GALEANI2015} uses the hybrid systems framework to propose an allocation mechanism which takes into account for input constraints. In \cite{Petersen_2006}, given the actuators constraints, optimal allocation are computed from optimization-based algorithms, as interior point method. The online implementation of this kind of technique can be, however, computationally expensive, while stability analysis of the closed loop is not straightforward. Indeed, the presence of constraints induces errors between the desired control effort and the actual plant input, which may lead to poor response and even instability. To overcome this, Lyapunov-based approaches with guarantees of stability for the constrained closed-loop system have also been studied, for example, in \cite{Castro_2019,Benosman_2009,Liao_2007,Johansen_2004}. In the same context, in \cite{MTNS2021} the authors presented allocation function and anti-windup design with an optimization procedure in order to minimize this error while guaranteeing closed-loop stability. The proposed allocation format and design procedure did not however take into account the ability to penalize the use of the different actuators and energy consumption~minimization.

In \cite{ZACCARIAN_2009}, the concepts of weak and strong redundancy are formally defined, implying for the first concept that multiple actuators can induce the same steady-state value for the plant output while the second one implies that they can also impose equal trajectories. The paper focused on using a dynamic allocator system between the controller and the plant to distribute the control effort by penalizing the use of multiple actuators. The cases of input and rate saturation of the actuators were also addressed. The use of dynamic allocation functions was then shown to be a good alternative in terms of both computational effort and robustness. \textcolor{black}{In \cite{AlvesLima2021}, the idea of using dynamic allocator was extended to a setup where: i) the co-design of the anti-windup and allocation function is done simultaneously through LMIs that are shown to be always feasible in the local case and ii) the size of the plant input (noted $m_c$ here) may be different to the size of the allocator output (noted $m_a$ here).}

\textcolor{black}{In the current chapter, we follow the same line as in \cite{AlvesLima2021} but with the ambition to deal with an even more general setup than the above-described problems. In this sense, we consider that: iii) the plant can be perturbed by an additive disturbance, which is bounded; and iv) the influence matrix (noted $\M$ here), which maps how the different actuators generate the plant input, can be affected by \emph{unknown} parameters}. Theoretical conditions based on Lyapunov theory
 are formulated in terms of linear matrix inequalities (LMIs) in order to solve the co-design of the allocator and anti-windup loop. From these conditions,  multiple objective problems can be handled: in order to enlarge the estimates of the basin of attraction,  minimize the total energy consumption of the actuators and maximize the size of the admissible disturbance. The satellite formation
problem borrowed from the literature is then revised to illustrate the proposed technique and show its effectiveness in the both cases of perturbed system and uncertain influence matrix  case.

The chapter is organized as follows. Section \ref{sec:pb} is dedicated to present the general view of the control allocation, and to specify the class of the plant, controller and allocation function under consideration. Section \ref{sec:prelim} presents some useful results for developing the main conditions. Section \ref{sec:main} then proposes the main theoretical conditions, in the perturbed and robust case. The associated optimization schemes are discussed in Section \ref{sec:optim}. In Section \ref{sec:simu}, The satellite formation
problem borrowed from the literature emphasizes the interest of the proposed approach. Finally, Section \ref{sec:conclu} ends the chapter by presenting concluding remarks and forthcoming issues.\\

\noindent \textbf{Notation.} For a matrix $\Y $ in $ \reals^{n \times m}$, $\Y^{\T} $ in $ \reals^{m \times n}$ means its transpose, $\Y_{\left(i\right)}$ denotes its $i$th row, while for $v $ in $ \reals^{m}$, $v_{\left(i\right)}$ denotes its $i$th component. For matrices $\W = \W^{\T}$ and $\Z = \Z^{\T} $ in $ \reals^{n \times n}$, $\W \succ \Z$ means that $\W-\Z$ is positive definite. Likewise, $\W \succeq \Z$ means that $\W-\Z$ is positive semi-definite. $\mathbb{S}_n^{+}$ stands for the set of symmetric positive definite matrices in $\reals^{n \times n}$. $\Imm$ and $0$ denote identity and null matrices of appropriate dimensions, although their dimensions can be explicitly presented whenever relevant. The $\star$ in $\begin{bmatrix} \A & \B \\ \star & \C \end{bmatrix}$ denotes symmetric blocks, that is $\star = \B^{\T}$. We define He$\{ \A \}$ as the operator He$\{ \A \} = \A+\A^{\T}$. Finally, for matrices $\W$ and $\Z$, diag$(\W,\Z)$ corresponds to the block-diagonal matrix.

\section{Problem formulation}
\label{sec:pb}
\subsection{General view}

Fig. \ref{Fig1} presents the general view of the control allocation problem with anti-windup, where the regulatory case, i.e. $r=0$, is considered. 


\begin{figure}[!hhh]
\center{\includegraphics[width=0.8\linewidth,angle=0]{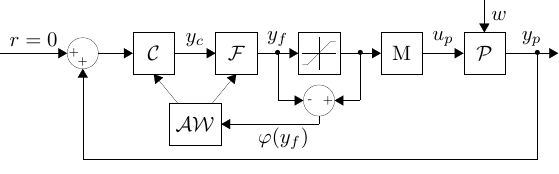}
\caption{General view of control allocation problem with anti-windup.}
\label{Fig1}}
\end{figure}

Subsystems $\mathcal{C}$, $\mathcal{F}$, and $\mathcal{P}$ are the controller, the control allocation device, and the plant, respectively, while $\M$ is the influence matrix and $\mathcal{AW}$ represents some anti-windup strategy to be specified later. The plant is driven by the input $u_p : \reals_{+} \rightarrow \reals^{m_c}$. The controller computes a set of desired efforts that must be injected in the plant in ideal conditions, represented by its output $y_c: \reals_{+} \rightarrow \reals^{m_c}$. The plant input is generated by a set of $m_a \geq m_c$ actuators, represented by the signal $y_f: \reals_{+} \rightarrow  \reals^{m_a}$, and is also affected by external disturbances $w: \reals_{+} \rightarrow  \reals^{n_w}$. The plant input, at each time instant $t \in \reals_{+}$, is given by $u_p (t)=\M sat (y_f (t))$, where for a vector $s \in \reals^{m_a}$, the decentralized saturation function being defined as
\begin{equation}\label{eq_sat}
sat(s_{(i)}) = sign(s_{(i)}) \min\{|s_{(i)}|,\xbar{u}_{(i)} \}, \xbar{u}_{(i)}>0, 
  \end{equation}
\noindent for $i=1, \dots, m_a$, where $\xbar{u}_{(i)}$ denotes the magnitude bound in each actuator.

The presumption of possessing a flawless model of a physical system is frequently an oversight in control engineering, given that real systems typically exhibit intricate dynamics that are not completely captured by simpler models. Disregarding this reality can give rise to consequential issues, such as potential instability or degradation in performance in practical applications. Within the domain of control allocation, a noteworthy wellspring of uncertainty lies in the knowledge of the influence matrix, susceptible to errors arising from the aging and failure of actuators. In this work, we consider that the influence matrix is given as
\begin{equation}\label{eq:uncertainInfluence}
    \M(\theta) = \M_n + \M_u(\theta),
\end{equation}
where $\theta \in \Theta \subset \reals^r$ is an uncertain bounded vector parameter lying in a convex polytope $\Theta$, and $\M_n$ represents the known part of $\M$. The indices $n$ and $u$ in $\M$ refer to ``nominal'' and ``uncertain'', respectively. The following assumption is taken on $\M_u(\theta)$. 

\begin{assump}\label{assumption1}
There exist parameters $\alpha_i(\theta)$, $i \in \{1,..,n_{\alpha}\}$, with \(n_{\alpha}=2^r\), belonging to the unit simplex
\begin{eqnarray} \label{eq:polytope} 
\hspace{-0.2cm}&\Omega=\{\alpha(\theta) \in \mathbb{R}^{n_{\alpha}}:\sum \limits_{i=1}^{n_{\alpha}} \alpha_i=1; \alpha_i \geq 0; i=1,\ldots,n_{\alpha}\},
\end{eqnarray}
such that $\M_u(\theta)$ can be written as the convex combination of $n_{\alpha}$ matrices $M_u$, i.e,  $\M_u(\theta)=\sum_{i=1}^{n_{\alpha}} \alpha_i \M_i$.
\end{assump}

In particular, Assumption~\ref{assumption1} holds if and only if $\M_u(\theta)$ has affine dependence in $\theta$~\cite{boyd2004convex}. The influence matrix $\M$ maps how each individual effort of the $m_a$ actuators combines to generate the inputs acting on the plant. The simplest allocation function often considered in the literature is given by the right pseudo-inverse of $\M$, that is, $\mathcal{F} = \Mp$, with $\M \Mp = \Imm$. In the uncertain case considered in this chapter, such a strategy cannot be implemented since the parameter $\theta$ is unknown. One possibility in this case would be to consider the right-pseudo inverse of the known part of $\M$, with $\M_n \Mp = \Imm$.

In the case the actuator is neither subject to saturation nor uncertainties $\theta$, this allocator can guarantee the stability of the closed loop since the interconnection is given by $u_p = \M_n \Mp_n y_c = y_c$, and therefore no error between $u_p$ and $y_c$ is produced. However, this allocator does not fully take advantage of the multi-actuated nature of the system for energy distribution among actuators, besides lacking stability proofs in the presence of actuator nonlinearities.

\subsection{Plant and controller description}
Consider the plant $\mathcal{P}$ described by the following equations
\begin{equation} \label{eq:model_disturb}
\mathcal{P}_w \sim \begin{cases}
    \Dot{x}_p(t) = \A_p x_p(t) + \B_p u_p(t) + \B_w w(t) , \\
    y_p(t) = \C_p x_p(t).
\end{cases}
\end{equation}
\noindent where $x_p(t) $ in $ \reals ^{n_p}$ is the plant state vector, $u_p(t) $ in $ \reals^{m_c}$ is the plant input, $y_p(t) $ in $ \reals^{q}$ is the measured output, \textcolor{black}{and $w(t) \in \reals^{n_w}$ is an unknown external disturbance}. $\A_p$, $\B_p$, and $\C_p$ are all constant and known matrices of appropriate dimensions. Furthermore, the pairs $(\A_p,\B_p)$ and $(\C_p,\A_p)$ are supposed to be controllable and observable.

In control systems, it is common for the controlled system to be affected by external disturbances of an unknown nature. In order to consider their effects, we can suppose that the disturbance belongs to a set of functions, such as energy-limited or amplitude-bounded functions. In this work, we consider the former type. A review of how to consider the amplitude-bounded disturbances can be found, for example, in \cite[Chapter 2.3.1]{Tarbouriech_2011}.

Consider then the following class of functions 
\begin{equation}\label{def:Wset}
    \mathcal{W} = \{ w : [0,\infty) \rightarrow \reals^{n_w} ; \bigintssss_0^{\infty} w(\tau)^\top \R w (\tau) d \tau < \sigma^{-1} \}
\end{equation}
for some $\sigma>0$ and positive definite matrix $R$.

Let us assume that the \emph{undisturbed nominal} plant (\ref{eq:model_disturb}) (with $w=0$ and $\theta=0$) is stabilized by a dynamic output controller $\mathcal{C}$ linearly designed via the connection $u_p(t)=y_c(t)$, that is without taking into account the saturation and with $\mathcal{F}_n=\Mp_n$. The controller $\mathcal{C}$ is defined by the following equations
\begin{equation} \label{eq:control}
\mathcal{C} \sim \begin{cases}
    \Dot{x}_c(t) = \A_c x_c(t) + \B_c y_p(t) + \Ematrix_c\varphi(y_f(t)), \\
    y_c(t) = \C_c x_c(t) + \Dmatrix_c y_p(t),
\end{cases}
\end{equation}
\noindent where $x_c(t)$ in $ \reals ^{n_c}$ is the controller state vector and $y_c(t)$ in $ \reals^{m_c}$ is the controller output. $\A_c$, $\B_c$, $\C_c$ and $\Dmatrix_c$ are supposed known.
In this chapter, we consider the anti-windup signal $v_{aw}=\Ematrix_c\varphi(y_f)$, with matrix $\Ematrix_c $ in $ \reals ^{n_c \times m_a}$ and deadzone nonlinearity $\varphi(y_f)$ given by
\begin{equation}
\varphi(y_f)=sat(y_f)-y_f,
\label{DZident}
\end{equation}
where the saturation map is defined from (\ref{eq_sat}) and $y_f$ is the output of the allocation function \textcolor{black}{to be defined in the sequence}. Such an anti-windup compensation is added in order to mitigate the undesired effects of saturation  (see, for example, \cite{Tarbouriech_2011}, \cite{bookZaccarian}).

\begin{remark} By construction, the linear connection plant-controller is supposed to be stable. In other words, the controller \eqref{eq:control} (with $v_{aw}(t) = 0, \forall t \in \reals_{+}$) stabilizes the \emph{undisturbed nominal plant} (i.e., \eqref{eq:model_disturb} with $w(t)=0,  \forall t \in \reals_{+}$ and $\theta=0$) through the linear 
interconnection $u_p = y_c$  
and therefore the matrix
\begin{equation}
\A_0 = \left[\begin{array}{cc}
\A_p + \B_p \Dmatrix_c \C_p & \B_p \C_c\\
\B_c \C_p & \A_c
\end{array}\right] \text{ in } \reals^{(n_p+n_c)\times(n_p+n_c)}
\label{eq:A0}
\end{equation}
is Hurwitz.
\label{rem:linearcon}
\end{remark}

\subsection{Dynamic allocation function description}

Consider the influence matrix $\M(\theta)$ in $ \reals ^{m_c \times m_a}$ in the case $m_a > m_c$, supposedly full row rank. Let $\N$ in $ \reals ^{m_a \times n_f}$, $n_f=m_a-m_c$, be a basis for the Kernel of $\M_n$ (the ``nominal'' part of $\M(\theta)$), i.e. $\M_n\N=0$, and $\Mp$ be the right pseudo-inverse of $\M_n$. We then propose the following dynamic allocation function
\begin{equation} \label{eq:alloc}
\hspace{-0.012cm}\mathcal{F} \hspace{-0.08cm}\sim\hspace{-0.08cm} \begin{cases}
     \Dot{x}_f(t) = \K_f \N^{\T}  \W\N x_f(t) \hspace{-0.02cm} + \hspace{-0.02cm}\K_f \N^{\T}\W\Mp y_c(t) \hspace{-0.02cm}+\hspace{-0.02cm} \Ematrix_f\varphi(y_f(t)), \\
     y_f(t) = \N x_f(t) + \Mp y_c(t),
\end{cases}
\end{equation}
\noindent where $x_f(t)$ in $ \reals^{n_f}$ is the allocator state vector, and $y_f(t)$ in $\reals^{m_a}$ is the allocator output. $\W$=diag$(w_1,w_2,\dots,w_{m_a})$ in $\mathbb{S}_{m_a}^{+}$ is a matrix which receives the weightings that penalizes the use of each actuator. Matrices $\K_f$ in $ \reals^{n_f\times n_f}$ and $\Ematrix_f$ in $ \reals^{n_f\times m_a}$ must be designed to achieve desired behavior of the allocator by taking into account the presence of saturation. This allocation format is particularly interesting since it is in some sense optimal in terms of both the allocation error and actuators usage, as explained in the next two remarks \cite{AlvesLima2021}. 

\begin{remark}
Consider the general expression $y_f(t)=\C_f x_f(t) + \Dmatrix_f y_c(t)$, and let us define the allocator error as $e(t)=u_p(t)-y_c(t)$. Furthermore, consider $\theta=0$, i.e., $\M = \M_n$. Then using the definition of $\varphi(y_f)$ in \eqref{DZident}, the expression $e(t) = \left( \M \Dmatrix_f -\Imm\right) y_c(t) + \M \C_f x_f(t) + \M \varphi(y_f(t))$ is easily obtained. It is straightforward to see that the choice $\Dmatrix_f =\Mp$, $\C_f=\N$ leads to $e(t)=\M \varphi(y_f(t))$, therefore the error is null in the absence of saturation and uncertainties. Furthermore, by guaranteeing convergence of the extended vector $x(t)=\begin{bmatrix} x_p^{\T}(t) & x_c^{\T}(t) & x_f^{\T}(t) \end{bmatrix} ^{\T}$ to the origin, we always obtain $e^{*}=0$, where $e^{*}$ is the steady-state value of $e$. 
\end{remark}

\begin{remark}
The optimal solution to the cost function
\begin{equation}\label{eq:cost}
    \min\limits_{x_f} \Tm (y_f)=y_f^{\T}\W y_f \text{ subject to } y_f = \N x_f + \Mp y_c^{*},
\end{equation}
where $y_c^{*}$ is any controller output, 
is given by $x_f=-(\N^{\T}\W^{\T}\N)^{-1}\N^{\T}\W\Mp y_c^{*}$. 
That corresponds to the steady-state value of $x_f$ in \eqref{eq:alloc}. 
\end{remark}

\subsection{Closed-loop system and problem formulation}

By taking into account the definitions of $\mathcal{P}$, $\mathcal{C}$, $\mathcal{F}$, the definition of $\varphi(y_f)$ in \eqref{DZident} and the connection $u_p=\M sat(y_f)$, using~\eqref{eq:uncertainInfluence} and letting $\N$ in $ \reals ^{m_a \times n_f}$ be a basis for the Kernel of $\M_n$ (the ``nominal'' part of $\M(\theta)$) and $\Mp$ be the right pseudo-inverse of $\M_n$, the complete closed-loop system with $x(t)=\begin{bmatrix} x_p^{\T}(t) & x_c^{\T}(t) & x_f^{\T}(t) \end{bmatrix} ^{\T}$ in $\reals^{n}$, $n=n_p+n_c+n_f$, can be written as
\begin{equation} \label{eq:CLsystemUncertain}
\begin{cases}
    \Dot{x}(t) = (\mathcal{A}(\theta)+\Lm_f \K_f \xbar{\C}) x(t) + (\mathcal{B}(\theta)+\Lm \Ematrix) \varphi(y_f(t)) + \xbar{B}_w w(t), \\
    y_f(t) = \C x(t), 
\end{cases}
\end{equation}
where $\mathcal{A}(\theta) = \A+\xbar{\B}\M_u(\theta)\C = \sum_{i=1}^{n_{\alpha}} \alpha_i \A_i$ and $\mathcal{B}(\theta) = \B+\xbar{\B}\M_u(\theta) = \sum_{i=1}^{n_{\alpha}} \alpha_i \B_i$, where the $\A_i$ and $\B_i$ matrices are simply given by $\A_i = A + \xbar{\B}\M_i\C$ and $\B_i = \B + \xbar{\B}\M_i$, respectively, for $i \in \{1,...,n_{\alpha}\}$. Furthermore, $\Lm_f = \begin{bmatrix}
    0_{n_f \times n_p} & 0_{n_f \times n_c} & \Imm_{n_f}
\end{bmatrix}^{\T}$, $\xbar{\C}=\N^{\T}\W\C$, $\xbar{\B}_w = \begin{bmatrix}
    \B_w^{\top} & 0 & 0
\end{bmatrix}^{\top}$ and
\begin{align*}
{\A}&=\begin{bmatrix} 
\A_0 & 0 \\
0 & 0
\end{bmatrix}, \B= 
\xbar{\B}\M ,~~\xbar{\B} = \begin{bmatrix}
    \B_p \\ 0 
\end{bmatrix}, \Ematrix=\begin{bmatrix} 
\Ematrix_c  \\
\Ematrix_f \\
\end{bmatrix}, \Lm = \begin{bmatrix}
    \Lm_c & \Lm_f
\end{bmatrix}, \\
\Lm_c &= \begin{bmatrix}
    0_{n_c \times n_p} & \Imm_{n_c} & 0_{n_c \times n_f}
\end{bmatrix}^{\T}, 
\C =\begin{bmatrix} 
\Mp \Dmatrix_c \C_p & \Mp \C_c & \N  
\end{bmatrix}
\end{align*}
\noindent with $\A_0$ defined in \eqref{eq:A0}.
Due to the presence of the deadzone in the closed-loop dynamics (\ref{eq:CLsystem}) we need to characterize a suitable region of the state space in which the stability is ensured (see, for example, \cite{Tarbouriech_2011}). In general  the global asymptotic stability of the origin (that is for any initial condition $x(0) \in \reals^{n}$) does not hold except if the open loop has suitable properties of stability \cite{Sontag_1984}. Hence, the regional stability 
(that is, only for initial conditions in a neighborhood of the origin) has to be studied.
Since 
exact characterization of the basin of attraction of the origin remains an open problem, a challenging problem consists in providing an estimate of the basin of attraction
as accurate as possible.   

Furthermore, in order to ensure some level of performance to the allocator in terms of the total energy consumption of the actuators, we impose conditions that limit the energy of the signal $sat(y_f)$. With respect to \eqref{eq:control} and \eqref{eq:alloc}, the main objective of the chapter is to  co-design the dynamic allocation function, that is $\K_f$, and $\Ematrix_f$, and the controller anti-windup gain $\Ematrix_c$. However, differently from~\cite{AlvesLima2021}, we consider the presence of uncertainties $\theta$ in the influence matrix and that the system is affected by unknown disturbances $w$.

\section{Preliminary results}\label{sec:prelim}
In this section we recall some useful results dealing with the deadzone and the way to address the design problem in the particular case without perturbation and uncertainty.

Consider a matrix $\G\in \reals ^{m_a \times n}$,  and define the set
\begin{equation}\label{polySet}
         \Lap(\bar{u})=  \{ x \in \reals ^{n} ;  |\G_{(i)} x| \leq \xbar{u}_{(i)}, i = 1, ..., m_a  \},
\end{equation}
\noindent Then, nonlinearity $\varphi(y_f)$, with $y_f$ in (\ref{eq:CLsystem}), satisfies the following Lemma directly derived from Remark 7.4 p.289 
in \cite{Tarbouriech_2011}.
\begin{lem}\label{lemma:Sec}[Generalized sector condition] If $x$ belongs to set $\Lap(\bar{u})$, defined in (\ref{polySet}), then the deadzone nonlinearity $\varphi(y_f)$ satisfies the following inequality for any diagonal~matrix~$\Ss$~in~$\mathbb{S}_{m_a}^{+}$ 
\begin{equation}\label{SecBoun}
     \varphi^{\T}(y_f) \Ss^{-1} [\varphi(y_f)+\C x + \G x]\leq 0.
\end{equation}
\end{lem}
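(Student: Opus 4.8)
The plan is to reduce the matrix inequality \eqref{SecBoun} to a family of scalar inequalities, one per actuator channel, by exploiting the fact that $\Ss^{-1}$ is diagonal and positive definite, and then to settle each scalar inequality by the case analysis naturally induced by the saturation map \eqref{eq_sat}. First I would use the output relation $y_f=\C x$ to rewrite the bracketed term as
\begin{equation*}
\varphi(y_f)+\C x+\G x = \bigl(sat(y_f)-y_f\bigr)+y_f+\G x = sat(y_f)+\G x,
\end{equation*}
so that \eqref{SecBoun} is equivalent to the more transparent statement $\varphi^{\T}(y_f)\,\Ss^{-1}\bigl[sat(y_f)+\G x\bigr]\leq 0$.

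Writing $\Ss=\mathrm{diag}(s_1,\dots,s_{m_a})$ with $s_i>0$, the left-hand side equals $\sum_{i=1}^{m_a} s_i^{-1}\,\varphi_{(i)}(y_f)\,\bigl[sat(y_f)_{(i)}+(\G x)_{(i)}\bigr]$, and since the weights $s_i^{-1}$ are positive it suffices to prove that each summand is nonpositive. Fixing $i$ and abbreviating $\zeta=(\C x)_{(i)}=y_{f(i)}$, $g=(\G x)_{(i)}$ and $\bar u=\xbar{u}_{(i)}$, the hypothesis $x\in\Lap(\bar u)$ is used here, and only here: by the definition of $\Lap(\bar u)$ in \eqref{polySet} it states precisely that $|g|\leq\bar u$.

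The remaining step is the three-case argument dictated by \eqref{eq_sat}. If $|\zeta|\leq\bar u$, then $sat(\zeta)=\zeta$, hence $\varphi_{(i)}(y_f)=0$ and the summand vanishes. If $\zeta>\bar u$, then $sat(\zeta)=\bar u$, so $\varphi_{(i)}(y_f)=\bar u-\zeta<0$, while $sat(\zeta)+g=\bar u+g\geq 0$ because $g\geq-\bar u$; the product is therefore $\leq 0$. The case $\zeta<-\bar u$ is symmetric: $\varphi_{(i)}(y_f)=-\bar u-\zeta>0$ and $sat(\zeta)+g=-\bar u+g\leq 0$. Summing over $i$ with the positive weights $s_i^{-1}$ then yields \eqref{SecBoun}. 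I do not anticipate a genuine obstacle: the only points requiring care are the sign bookkeeping in the two saturated branches and the observation that $|(\G x)_{(i)}|\leq\xbar{u}_{(i)}$ is exactly the condition needed to keep $sat(\zeta)+(\G x)_{(i)}$ on the opposite side of zero from $\varphi_{(i)}(y_f)$, which is why the free matrix $\G$ — and hence the polytopic region $\Lap(\bar u)$ — can replace $\C$ here. This reproduces, in the present notation, the standard derivation behind Remark~7.4 of \cite{Tarbouriech_2011}.
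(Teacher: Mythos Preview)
Your argument is correct and is precisely the standard derivation of the generalized sector condition. The paper itself does not supply a proof of this lemma at all: it merely states the result and points to Remark~7.4, p.~289 of \cite{Tarbouriech_2011}, so there is no ``paper's own proof'' to compare against beyond that reference. What you wrote is exactly the componentwise case analysis underlying that remark, so your proposal is both correct and aligned with the cited source.
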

Another important result widely known in the literature is re-enunciated next (see, for example, \cite{Mauricio_2001}).  
\begin{lem}\label{lemma:Finsler}[Finsler's Lemma]
Consider $\zeta $ in $ \reals^{n}$, ${\Upsilon}={\Upsilon}^{\T} $ in $ \reals^{n \times n}$, and $\Gamma $ in $ \reals^{m \times n}$. The following facts are equivalent:
\renewcommand{\theenumi}{\roman{enumi}}%
\begin{enumerate}
    \item $\zeta^{\T} {\Upsilon} \zeta<0$, $\forall \zeta$ such that $\Gamma\zeta=0$, $\zeta \neq 0$.
    \item $\exists \mathfrak{I} $ in $ \reals^{n \times m}$ such that ${\Upsilon}+\mathfrak{I} \Gamma+\Gamma^{\T}\mathfrak{I}^{\T} \prec 0$.
\end{enumerate}
\end{lem}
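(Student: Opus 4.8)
The implication (ii) $\Rightarrow$ (i) is the routine one, and I would dispatch it first: given $\mathfrak{I}$ with $\Upsilon + \mathfrak{I}\Gamma + \Gamma^{\T}\mathfrak{I}^{\T} \prec 0$, any $\zeta \neq 0$ with $\Gamma\zeta = 0$ annihilates the two cross terms when we form the associated quadratic form, leaving $0 > \zeta^{\T}(\Upsilon + \mathfrak{I}\Gamma + \Gamma^{\T}\mathfrak{I}^{\T})\zeta = \zeta^{\T}\Upsilon\zeta$, which is exactly (i). So all the work is in (i) $\Rightarrow$ (ii).

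For (i) $\Rightarrow$ (ii) the plan is to pass through a \emph{scalar}-multiplier intermediate statement: I would show that (i) implies the existence of a $\mu > 0$ such that $\Upsilon - \mu\,\Gamma^{\T}\Gamma \prec 0$, and then simply take $\mathfrak{I} = -\tfrac{\mu}{2}\Gamma^{\T}$, for which $\Upsilon + \mathfrak{I}\Gamma + \Gamma^{\T}\mathfrak{I}^{\T} = \Upsilon - \mu\,\Gamma^{\T}\Gamma \prec 0$. (The case $\Gamma = 0$ is trivial: (i) then just says $\Upsilon \prec 0$, and $\mathfrak{I} = 0$ works.)

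To get the scalar version I would argue by contradiction, exploiting compactness of the unit sphere in $\reals^{n}$. If no admissible $\mu$ exists, then for every $k \in \naturals$ there is $\zeta_k$ with $\|\zeta_k\| = 1$ and $\zeta_k^{\T}\Upsilon\zeta_k - k\,\|\Gamma\zeta_k\|^2 \geq 0$; hence $\zeta_k^{\T}\Upsilon\zeta_k \geq 0$ and $\|\Gamma\zeta_k\|^2 \leq \zeta_k^{\T}\Upsilon\zeta_k/k \leq \|\Upsilon\|/k$. Passing to a convergent subsequence $\zeta_k \to \zeta^{\star}$ with $\|\zeta^{\star}\| = 1$, continuity gives $\Gamma\zeta^{\star} = 0$ and $\zeta^{\star\T}\Upsilon\zeta^{\star} = \lim_k \zeta_k^{\T}\Upsilon\zeta_k \geq 0$ — a nonzero vector in $\ker\Gamma$ on which $\Upsilon$ fails to be negative, contradicting (i). The step I expect to be the most delicate is precisely this limiting argument: one must ensure the limit vector is nonzero (which is why normalizing to the unit sphere matters) and that both the constraint $\Gamma\zeta_k \to 0$ and the sign $\zeta_k^{\T}\Upsilon\zeta_k \geq 0$ survive in the limit (the uniform bound $\|\Gamma\zeta_k\|^2 \leq \|\Upsilon\|/k$ and continuity of $\zeta \mapsto \zeta^{\T}\Upsilon\zeta$ handle this). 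As an alternative I could give a coordinate-based proof — rotate so that $\Gamma = [\,\Gamma_1 \ \ 0\,]$ with $\Gamma_1$ of full column rank, partition $\Upsilon$ conformally, observe that (i) forces the trailing diagonal block to be negative definite, and then finish with a Schur-complement estimate after subtracting a large enough multiple of $\Gamma^{\T}\Gamma$ — but I would keep the compactness argument as the main line, since it is shorter and dimension-free.
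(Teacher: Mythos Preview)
Your argument is correct: the easy direction is immediate, and for (i) $\Rightarrow$ (ii) the route through a scalar multiplier $\mu$ with $\Upsilon - \mu\,\Gamma^{\T}\Gamma \prec 0$, obtained by a compactness/contradiction argument on the unit sphere and then specialized to $\mathfrak{I} = -\tfrac{\mu}{2}\Gamma^{\T}$, is a standard and sound proof of Finsler's lemma.

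Note, however, that the paper does \emph{not} supply its own proof of this statement: Lemma~\ref{lemma:Finsler} is only quoted as a well-known tool, with a reference to the literature, and is then invoked as a black box in the proofs of Theorems~\ref{disturbtheorem} and~\ref{robusttheorem}. So there is no ``paper's proof'' to compare against; your write-up simply fills in what the paper chose to cite rather than prove.
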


For pedagogical purposes, we review the results from~\cite{AlvesLima2021} for the nominal and undisturbed ($w(t)=0$ and $\theta=0$) system given by
\begin{equation} \label{eq:CLsystem}
\begin{cases}
    \Dot{x}(t) = (\A+\Lm_f \K_f \xbar{\C}) x(t) + (\B+\Lm \Ematrix) \varphi(y_f(t)) \\
    y_f(t) = \C x(t) 
\end{cases}
\end{equation}

Before reviewing the results from~\cite{AlvesLima2021}, let us define the auxiliary matrix $\Psi$. 

\begin{equation}\label{eq:Psi}
    \Psi = \left[
\setlength\arraycolsep{2pt}
\begin{array}{c|c}
    \Psi_a & \Psi_b \\
    \hline
    \star & \Psi_c
\end{array}
\right] =
\left[
\setlength\arraycolsep{2pt}
\begin{array}{cc|cc}
-\xbar{\J}-\xbar{\J}^{\T} & \Psi_{12} & \Psi_{13} & 0 \\
   \star  & \Psi_{22}  & \Psi_{23} &  \xbar{\J} \C^{\T}\hspace{-0.1cm}\W^{\frac{1}{2}} \\
    \hline
\star  &  \star & -2\Ss & \Ss \W^{\frac{1}{2}}  \\
   \star & \star  &  \star &  -\gamma \Imm 
\end{array}
\right]
\end{equation}
with $\Psi_{12}=\xbar{\Pp} + \A \xbar{\J}^{\T} \hspace{-0.1cm}+ \Z  -\xbar{\J}$, $\Z = $diag$(0_{n_p+n_c},\xbar{\Ku}_f)$, $\Psi_{13} = \B \Ss + \Lm \Ku_e$, $\Psi_{22}=\text{He}\{\A\xbar{\J}^{\T}\hspace{-0.1cm}+\Z\}$, $\Psi_{23} = \Psi_{13}-\xbar{\G}^{\T}\hspace{-0.15cm}-\xbar{\J} \C^{\T}$,  and where $\xbar{\J}=\begin{bmatrix}
 \xbar{\C}^{\perp}\J_o^{\T} & \J_f^{\T}
\end{bmatrix}^{\T} \hspace{-0.2cm}$ in $\reals^{n \times n}$, $\xbar{\C}^{\perp} \hspace{-0.1cm}$ in $\reals^{n \times (n_p+n_c)}$ is a matrix such that $\xbar{\C}\hspace{0.05cm}\xbar{\C}^{\perp} = 0$. Furthermore, $\{\xbar{\Pp}$, ${\J_o}$, ${\J_f}$, $\xbar{\Ku}_f $, $\Ku_e $, $\xbar{\G} $, ${\Ss}$, $\gamma\}$ are variables that will be specified in the sequence.

\begin{theorem}\label{stabtheorem}
Assume the existence of matrices $\xbar{\Pp} $ in $\mathbb{S}_n^{+}$, ${\J_o}$ in $ \reals ^{(n_p+n_c) \times (n_p+n_c)}$, ${\J_f}$ in $ \reals ^{n_f \times n}$, $\xbar{\Ku}_f $ in $ \reals ^{n_f \times n_f}$, $\Ku_e $ in $ \reals ^{(n_c+n_f) \times m_a}$, $\xbar{\G} $ in $ \reals ^{m_a \times n}$, diagonal matrix ${\Ss}={\Ss}^{\T} $ in $\mathbb{S}_{m_a}^{+}$ and positive scalar $\gamma$ such that 
\begin{equation}\label{main_matineq}
\Psi  \prec 0,
\end{equation}
\begin{equation}
\label{eqRAS}
\begin{bmatrix}
\xbar{\Pp} & \xbar{\G}_{\left(i\right)}^\T \\
\star & \xbar{u}^2_{\left(i\right)} \\
\end{bmatrix} \succeq 0, \text{ for } i = 1, ..., m_a, 
\end{equation}
\noindent hold. Then, matrices $\Ematrix=\begin{bmatrix} \Ematrix_c^{\T}  &  \Ematrix_f ^{\T} \end{bmatrix}^{\T}  = \Ku_e \Ss^{-1}$, $\Ku_f=\xbar{\Ku}_f {\left(\xbar{\C}\J_f^{\T}\right)}^{-1} \hspace{-0.2cm}$ are such that:
\begin{enumerate}
    \item the nominal undisturbed closed-loop system \eqref{eq:CLsystem} is asymptotically stable in the ellipsoid $\varepsilon(\Pp,1)=\{ x $ in $ \reals^{n}; x^{\T} \Pp x \leq 1 \}$, with $\Pp=\J \xbar{\Pp} \J^{{\T}}$ and $\J = \xbar{\J}^{-1}$;
    \item the energy of the actuators usage signal is limited and given by
    \begin{equation*}
        \bigintssss_0^\infty sat(y_f(\tau))^{\T} \W sat(y_f(\tau)) d \tau \leq \gamma.
    \end{equation*}
\end{enumerate}
\end{theorem}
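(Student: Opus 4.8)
The plan is to build a quadratic Lyapunov function $V(x)=x^{\T}\Pp x$ with $\Pp=\J\xbar{\Pp}\J^{\T}\succ0$ and to show simultaneously that (a) the sublevel set $\varepsilon(\Pp,1)$ is contained in the polyhedral region $\Lap(\bar u)$ where the generalized sector condition of Lemma~\ref{lemma:Sec} is valid, and (b) along trajectories of \eqref{eq:CLsystem} a dissipation inequality of the form $\dot V(x)+\gamma^{-1}\,sat(y_f)^{\T}\W\,sat(y_f)<0$ holds for all nonzero $x\in\varepsilon(\Pp,1)$ and all admissible deadzone values $\varphi(y_f)$. Item~(a) follows from \eqref{eqRAS}: applying a Schur complement to $\begin{bmatrix}\xbar{\Pp}&\xbar{\G}_{(i)}^{\T}\\ \star&\xbar u_{(i)}^2\end{bmatrix}\succeq0$ gives $\xbar{\G}_{(i)}\xbar{\Pp}^{-1}\xbar{\G}_{(i)}^{\T}\le\xbar u_{(i)}^2$; transporting this through the congruence $\J=\xbar{\J}^{-1}$ and setting $\G=\xbar{\G}\J^{\T}$ yields $\G_{(i)}\Pp^{-1}\G_{(i)}^{\T}\le\xbar u_{(i)}^2$, which is exactly the statement $\varepsilon(\Pp,1)\subseteq\Lap(\bar u)$, so Lemma~\ref{lemma:Sec} applies with this $\G$ and some diagonal $\Ss\succ0$.

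For item~(b) I would write the dissipation requirement as a quadratic form in the augmented vector $\zeta=\begin{bmatrix}x^{\T}&\dot x^{\T}&\varphi(y_f)^{\T}\end{bmatrix}^{\T}$ (or, to accommodate the $\gamma$ term and the $\W^{1/2}$ weights, an extended vector also carrying $\W^{1/2}sat(y_f)$), augmented by the $\Ss$-scaled sector inequality \eqref{SecBoun} via an $\mathcal{S}$-procedure term. The dynamics $\dot x=(\A+\Lm_f\K_f\xbar{\C})x+(\B+\Lm\Ematrix)\varphi(y_f)$ are imposed as a linear equality constraint $\Gamma\zeta=0$, so Finsler's Lemma (Lemma~\ref{lemma:Finsler}) converts ``quadratic form negative on the kernel of $\Gamma$'' into ``there exists a multiplier $\mathfrak I$ with $\Upsilon+\mathfrak I\Gamma+\Gamma^{\T}\mathfrak I^{\T}\prec0$''. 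The key algebraic move is to choose the Finsler multiplier with the structured shape $\mathfrak I\sim\xbar{\J}^{-1}$-blocks so that, after the congruence transformation by $\mathrm{diag}(\xbar{\J},\xbar{\J},\Ss,\Imm)$, the slack products linearize: the bilinear terms $\Pp(\Lm_f\K_f\xbar{\C})$ and $\Ematrix\Ss$ become the new variables $\xbar{\Ku}_f=\K_f\xbar{\C}\J_f^{\T}$ and $\Ku_e=\Ematrix\Ss$, the block-triangular form $\xbar{\J}=\begin{bmatrix}\xbar{\C}^{\perp}\J_o^{\T}&\J_f^{\T}\end{bmatrix}^{\T}$ being exactly what makes $\xbar{\C}\xbar{\J}^{\T}=\xbar{\C}\J_f^{\T}$ invertible and thus $\K_f$ recoverable as $\xbar{\Ku}_f(\xbar{\C}\J_f^{\T})^{-1}$. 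Matching the resulting matrix block-by-block to \eqref{eq:Psi} shows $\Psi\prec0$ is precisely this transformed inequality, so \eqref{main_matineq} delivers the dissipation inequality.

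Once the dissipation inequality holds on $\varepsilon(\Pp,1)$, the two conclusions drop out: integrating $\dot V\le-\gamma^{-1}sat(y_f)^{\T}\W sat(y_f)\le 0$ shows $\varepsilon(\Pp,1)$ is invariant and (since the right side is strictly negative away from the origin, the deadzone vanishing only at $x=0$ under the controllability/observability standing assumptions) that $V$ is a strict Lyapunov function there, giving asymptotic stability in $\varepsilon(\Pp,1)$ — this is conclusion~1. For conclusion~2, integrate the same inequality from $0$ to $\infty$ with $x(0)$ on the boundary of $\varepsilon(\Pp,1)$, i.e. $V(x(0))\le1$: then $\int_0^\infty sat(y_f(\tau))^{\T}\W sat(y_f(\tau))\,d\tau\le\gamma\bigl(V(x(0))-V(x(\infty))\bigr)\le\gamma$, using $V(x(\infty))\to0$.

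I expect the main obstacle to be bookkeeping rather than conceptual: verifying that the particular structured Finsler multiplier and congruence reproduce \emph{exactly} the block pattern of $\Psi$ — in particular that the off-diagonal blocks $\Psi_{13}$, $\Psi_{23}$ and the $\W^{1/2}$-weighted entries in the last row/column line up with the $\gamma$-scaled output term and the sector slack $\Ss$ simultaneously. The delicate point is ensuring the change of variables is genuinely \emph{invertible} (so the LMI solution reconstructs admissible gains): this hinges on $\xbar{\J}$ being nonsingular and on $\xbar{\C}\J_f^{\T}$ being nonsingular, which must be argued from $\Psi\prec0$ itself — the $(1,1)$ block $-\xbar{\J}-\xbar{\J}^{\T}\prec0$ forces $\xbar{\J}$ invertible, and a sub-block argument using the structure of $\xbar{\C}=\N^{\T}\W\C$ together with $\N$ being a full-column-rank kernel basis forces $\xbar{\C}\J_f^{\T}$ invertible. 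I would flag and dispatch that nonsingularity check explicitly before concluding.
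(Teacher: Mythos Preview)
Your proposal is correct and follows essentially the same route as the paper. The paper defers the proof of Theorem~\ref{stabtheorem} to \cite{AlvesLima2021}, but the machinery is reproduced verbatim in the proofs of Theorems~\ref{disturbtheorem} and~\ref{robusttheorem}: quadratic Lyapunov function, inclusion $\varepsilon(\Pp,1)\subseteq\Lap(\bar u)$ from \eqref{eqRAS}, sector condition plus the $\gamma^{-1}sat(y_f)^{\T}\W sat(y_f)$ term, augmented vector $\zeta=[\dot x^{\T}\;x^{\T}\;\varphi^{\T}]^{\T}$, Finsler with multiplier $\mathfrak I=[\J^{\T}\;\J^{\T}\;0]^{\T}$, Schur complement on the $\gamma$ term, congruence by $\mathrm{diag}(\xbar{\J},\xbar{\J},\Ss,\Imm)$, and the change of variables $\xbar{\Pp}=\xbar{\J}\Pp\xbar{\J}^{\T}$, $\xbar{\G}=\G\xbar{\J}^{\T}$, $\Ku_e=\Ematrix\Ss$, $\xbar{\Ku}_f=\Ku_f\xbar{\C}\J_f^{\T}$ --- exactly your plan, including the nonsingularity arguments for $\xbar{\J}$ and $\xbar{\C}\J_f^{\T}$ drawn from $\Psi\prec0$.
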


\begin{proof}
The proof can be found in~\cite{AlvesLima2021}.
\end{proof}

\section{Main results}\label{sec:main}

In this section, we present new results that generalize the results in~\cite{AlvesLima2021} to systems affected by disturbances $w$ and uncertainties $\theta$ in the influence matrix. For clarity reasons, these two cases are treated separately. However, treating these two cases together is straightforward by combining the results proposed in this section. 

\subsection{Design with external disturbances}\label{sec:disturb}

Considering the case $\M = \M_n$ (that is without uncertainty in the influence matrix), the closed-loop system affected by disturbances \textcolor{black}{$w \in  \mathcal{W}$} becomes
\begin{equation} \label{eq:CLsystemPertubed}
\begin{cases}
    \Dot{x}(t) = (\A+\Lm_f \K_f \xbar{\C}) x(t) + (\B+\Lm \Ematrix) \varphi(y_f(t)) + \xbar{B}_w w(t), \\
    y_f(t) = \C x(t), 
\end{cases}
\end{equation}
where $x(t)$ is the same augmented state vector as before. 

\begin{theorem} \label{disturbtheorem}
Assume the existence of matrices $\xbar{\Pp} $ in $\mathbb{S}_n^{+}$, ${\J_o}$ in $ \reals ^{(n_p+n_c) \times (n_p+n_c)}$, ${\J_f}$ in $ \reals ^{n_f \times n}$, $\xbar{\Ku}_f $ in $ \reals ^{n_f \times n_f}$, $\Ku_e $ in $ \reals ^{(n_c+n_f) \times m_a}$, $\xbar{\G} $ in $ \reals ^{m_a \times n}$, diagonal matrix ${\Ss}={\Ss}^{\T} $ in $\mathbb{S}_{m_a}^{+}$ and positive scalars $\gamma$ and $\mu$ such that 
\renewcommand{\arraystretch}{1.5}
\begin{equation}\label{main_disturb}
{\Psi}_w = \left[
\setlength\arraycolsep{2pt}
\begin{array}{cccc|c}
     & \phantom{A} & \Psi \phantom{A}  & \phantom{A} & \star \\
    \hline 
    \xbar{B}_w^{\top} & \xbar{B}_w^{\top} & 0 & 0 & -\R
\end{array}
\right]  \prec 0
\end{equation}
\begin{equation}
\label{eqRASdisturb}
\begin{bmatrix}
\xbar{\Pp} & \xbar{\G}_{\left(i\right)}^\T \\
\star & \mu \xbar{u}^2_{\left(i\right)} \\
\end{bmatrix} \succeq 0, \text{ for } i = 1, ..., m_a, 
\end{equation}
\begin{equation}\label{eq:muandsigma}
    \sigma - \mu \geq 0
\end{equation}
\noindent hold with $\Psi$ defined as in~\eqref{eq:Psi}. Then, matrices $\Ematrix=\begin{bmatrix} \Ematrix_c^{\T}  &  \Ematrix_f ^{\T} \end{bmatrix}^{\T}  = \Ku_e \Ss^{-1}$, $\Ku_f=\xbar{\Ku}_f {\left(\xbar{\C}\J_f^{\T}\right)}^{-1} \hspace{-0.2cm}$ are such that:
\begin{enumerate}
    \item \textcolor{black}{for $w=0$, the closed-loop system~\eqref{eq:CLsystemPertubed}} is asymptotically stable in the ellipsoid $\varepsilon(\Pp,\mu)=\{ x $ in $ \reals^{n}; x^{\T} \Pp x \leq \mu^{-1} \}$, with $\Pp=\J \xbar{\Pp} \J^{{\T}}$ and $\J = \xbar{\J}^{-1}$;
    \item the energy of the actuators usage signal is limited and given by
    \begin{equation*}
        \bigintssss_0^\infty sat(y_f(\tau))^{\T} \W sat(y_f(\tau)) d \tau \leq \gamma \mu^{-1}.
    \end{equation*}
    \item for any $w \in \mathcal{W}$ and initial condition $x(0) \in \varepsilon(\Pp,\beta)$, $0<\beta^{-1} \leq \mu^{-1}-\sigma^{-1}$, the trajectories of \textcolor{black}{the closed-loop system~\eqref{eq:CLsystemPertubed}} do not leave the ellipsoid $\varepsilon(\Pp,\mu)$. 
\end{enumerate}
\end{theorem}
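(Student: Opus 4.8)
The plan is to mimic the Lyapunov argument underlying Theorem~\ref{stabtheorem}, but to carry an extra disturbance channel and a rescaled ellipsoid along, and then to exploit the energy bound on $w\in\mathcal{W}$ to obtain the invariance claim in item~3. Concretely, I would take the quadratic Lyapunov function $V(x)=x^{\T}\Pp x$ with $\Pp=\J\xbar{\Pp}\J^{\T}$, $\J=\xbar{\J}^{-1}$, exactly as produced by the LMI variables, and show that along trajectories of~\eqref{eq:CLsystemPertubed} one has, for all $x$ in the sublevel set $\varepsilon(\Pp,\mu)$ and all admissible $w$,
\begin{equation*}
\dot V(x) + sat(y_f)^{\T}\W\,sat(y_f)\,\gamma^{-1} - w^{\T}\R w \;\le\; 0 ,
\end{equation*}
which is the augmented dissipation inequality whose matrix form is precisely $\Psi_w\prec0$ after the standard congruence/Schur manipulations. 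The first step is therefore to recall, from the proof of Theorem~\ref{stabtheorem} in~\cite{AlvesLima2021}, how $\Psi\prec0$ encodes $\dot V + \gamma^{-1}sat(y_f)^{\T}\W\,sat(y_f)\le 0$ for the unperturbed system~\eqref{eq:CLsystem}; the bordered matrix $\Psi_w$ in~\eqref{main_disturb} simply appends the row/column $\begin{bmatrix}\xbar B_w^{\top} & \xbar B_w^{\top} & 0 & 0 & -\R\end{bmatrix}$, which is exactly what one gets by adding the $\xbar B_w w$ term to the two $\dot x$-related block rows of the extended vector (state and its ``copy'' used in the descriptor/Finsler reformulation) and the $-w^{\T}\R w$ supply term. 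So item~1 and item~2 follow by restricting $w=0$: the top-left block $\Psi\prec0$ is implied by $\Psi_w\prec0$, and then Theorem~\ref{stabtheorem}'s conclusions apply verbatim on $\varepsilon(\Pp,1)$; the only change is bookkeeping of the scalar $\mu$, which enters through the modified sector-region constraint~\eqref{eqRASdisturb}, guaranteeing $\varepsilon(\Pp,\mu)\subset\Lap(\bar u)$ so that Lemma~\ref{lemma:Sec} is valid on the whole of $\varepsilon(\Pp,\mu)$.

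For item~3 the argument is the classical reachable-set / invariance estimate for energy-bounded disturbances. Starting from $\dot V \le w^{\T}\R w - \gamma^{-1}sat(y_f)^{\T}\W\,sat(y_f) \le w^{\T}\R w$ valid on $\varepsilon(\Pp,\mu)$, integrate from $0$ to any $t$ to get $V(x(t)) \le V(x(0)) + \int_0^t w^{\T}\R w\,d\tau < \beta^{-1} + \sigma^{-1} \le \mu^{-1}$, using $x(0)\in\varepsilon(\Pp,\beta)$, the definition~\eqref{def:Wset} of $\mathcal{W}$, and the hypothesis $\beta^{-1}\le\mu^{-1}-\sigma^{-1}$. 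A small continuity/contradiction argument then rules out the trajectory ever reaching the boundary $x^{\T}\Pp x=\mu^{-1}$: if it did at some first time $t^\star$, the inequality would have to hold with equality at $t^\star$, contradicting the strict bound just derived; hence the trajectory stays in $\varepsilon(\Pp,\mu)$ for all $t\ge0$, which in particular keeps it inside $\Lap(\bar u)$ so that the sector condition used to derive $\dot V\le w^{\T}\R w$ remains legitimate throughout — the argument is self-consistent.

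The step I expect to require the most care is the passage from the scalar dissipation inequality to the matrix inequality $\Psi_w\prec0$, i.e. reproducing the congruence transformation, the use of Finsler's Lemma~\ref{lemma:Finsler} to linearize the product $\Pp(\A+\Lm_f\K_f\xbar\C)$, and the Schur complements that move $\gamma^{-1}$ and $\Ss^{-1}$ into the off-diagonal blocks, all while correctly tracking where the new $\xbar B_w w$ term lands. Since this is exactly the construction already done in~\cite{AlvesLima2021} for $\Psi$, I would present it as ``following the same steps as in the proof of Theorem~\ref{stabtheorem}, with the additional disturbance term appended,'' and only spell out the new block. The scalar $\mu$ bookkeeping (it rescales $\xbar u_{(i)}^2$ in~\eqref{eqRASdisturb} and the ellipsoid level, and appears in~\eqref{eq:muandsigma}) is routine but must be kept consistent between the three items; the coupling condition $\sigma-\mu\ge0$ together with $\beta^{-1}\le\mu^{-1}-\sigma^{-1}$ is what makes the chain of inequalities in item~3 close, so I would verify that arithmetic explicitly.
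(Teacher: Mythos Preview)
Your proposal is correct and follows essentially the same route as the paper: quadratic Lyapunov function $V(x)=x^{\T}\Pp x$, the generalized sector condition on $\varepsilon(\Pp,\mu)$ via~\eqref{eqRASdisturb}, the augmented dissipation inequality $\dot V+\gamma^{-1}sat(y_f)^{\T}\W\,sat(y_f)-w^{\T}\R w<0$, and its matrix form obtained by appending the $\xbar B_w$ column and $-\R$ block to the Finsler/Schur/congruence construction of Theorem~\ref{stabtheorem}, followed by integration to get items~2 and~3. The only cosmetic difference is that you obtain items~1--2 by first extracting the principal submatrix $\Psi\prec0$ and invoking Theorem~\ref{stabtheorem} on the rescaled ellipsoid, whereas the paper derives the full perturbed inequality once and then specializes; both are valid and lead to the same bound $\gamma\mu^{-1}$.
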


\begin{proof}
In the case of energy-bounded disturbances $w$, our goal is to prove that the inequality $\Dot{\V}(x)-w^{\top} R w < 0$ holds, since by integration, it leads to the fact that  $V(x(T))-V(0)-\bigintssss_0^{T} w(\tau)^\top \R w (\tau) d \tau < 0$, $\forall T$, which ensures the satisfaction of items $1$ and $3$ in the theorem. 

Note first that the satisfaction of inequality (\ref{main_disturb}) means that matrix $\xbar{\J}$ is non-singular. Considering again a quadratic Lyapunov function $\V(x)=x^{\T} \Pp x$, with $\Pp \succ 0 $ in $\mathbb{S}_n^{+}$, the satisfaction of \eqref{eqRASdisturb} ensures the inclusion of the ellipsoid $\varepsilon(\Pp,\mu)$ in the  polyhedral set $\Lap(\bar{u})$, defined in (\ref{polySet}). Therefore, the satisfaction of relation \eqref{eqRASdisturb} means that  Lemma \ref{lemma:Sec} applies and one gets $-2\varphi^{\T}(y_f) \Ss^{-1} [\varphi(y_f)+\C x + \G x] \geq 0$, for any $x $ in $ \varepsilon(\Pp,\mu) \subseteq \Lap(\bar{u})$. Then for $x \in \varepsilon(\Pp,\mu) \subseteq \Lap(\bar{u})$, one gets $\Dot{\V}(x) -w^{\top} R w \leq \Dot{\V}(x) -w^{\top} R w-2\varphi^{\T}(y_f) \Ss^{-1} [\varphi(y_f)+\C x + \G x] \leq \Dot{\V}(x)-w^{\top} R w-2\varphi^{\T}(y_f) \Ss^{-1} [\varphi(y_f)+\C x + \G x]+\gamma^{-1}sat(y_f)^{\T} \W sat(y_f)$. Hence to obtain  $\Dot{\V}(x)-w^{\top} R w<0$,
it suffices that
\begin{equation} \label{ineq1disturb}
\begin{array}{c}
\Dot{\V}(x)-w^{\top} R w-2\varphi^{\T}(y_f) \Ss^{-1} [\varphi(y_f)+(\C + \G)x]\\
+\gamma^{-1}sat(y_f)^{\T} \W sat(y_f)<0,
\end{array}
\end{equation}
\noindent with $\Ss \succ 0$. 
Consequently, $\Dot{\V}(x)-w^{\top} R w+\gamma^{-1}sat(y_f)^{\T} \W sat(y_f)<0$ is also satisfied, which can be integrated resulting in
\begin{equation}\label{eq:erro_disturb}
   \gamma^{-1} \bigintssss_0^\infty \hspace{-0.1cm}sat(y_f(\tau))^{\T} \W sat(y_f(\tau)) d \tau  < \V(x(0)) +  \sigma^{-1} \leq \beta^{-1} + \sigma^{-1} \leq \mu^{-1},
\end{equation}
\noindent which leads to item ii) of Theorem \ref{stabtheorem}. By using \eqref{DZident} and defining the augmented vector  $\zeta_w=\begin{bmatrix} \Dot{x}^\T  & x^\T  &  \varphi(y_f)^\T & w^\T \end{bmatrix} ^\T$, we can rewrite inequality \eqref{ineq1disturb} as $\zeta_w^{\T} \Upsilon_w \zeta_w<0$, with the matrix $\Upsilon_w$ given by $\Upsilon_w = diag(\Upsilon,-R)$, where $\Upsilon$ is given by 
$$ \begin{bmatrix} 
  0  & ~~~\Pp &  ~~~0 \\
  \star  &  ~~~\C^{\T} \W^{\frac{1}{2}} \gamma^{-1} \W^{\frac{1}{2}} \C  &  ~~~-(\G+\C)^{\T}\Ss^{-1}+ \C^{\T} \W^{\frac{1}{2}} \gamma^{-1} \W^{\frac{1}{2}} \\
  \star  &  ~~~\star & ~~~\W^{\frac{1}{2}} \gamma^{-1} \W^{\frac{1}{2}} -2\Ss^{-1}
  \end{bmatrix}.$$ From the closed-loop system dynamics, the relation $\Gamma_w\zeta_w=0$ holds for 
\begin{equation} \label{Bbardefinition_disturb}
\setlength\arraycolsep{5pt}
 \Gamma_w = 
  \begin{bmatrix} 
  -\Imm & \A+\Lm_f \Ku_f \xbar{\C} &  \B+\Lm\Ematrix & \xbar{B}_w \\
   \end{bmatrix}.
\end{equation}
From Lemma \ref{lemma:Finsler}, by considering $\mathfrak{I}_w = \begin{bmatrix}
    \J^{\T} & \J^{\T} & 0 & 0 \end{bmatrix}^{\T}$, we obtain the new condition $\xbar{\Psi}_w={\Upsilon}_w+\mathfrak{I}_w \Gamma_w+\Gamma_w^{\T}\mathfrak{I}^{\T}_w \prec 0$. By applying a Schur complement to $\xbar{\Psi}_w$, followed by congruence transformation with a permutation matrix allowing permuting the fourth and fifth columns and lines, respectively, followed by pre- and post-multiplication by diag$(\J^{-1},\J^{-1},\Ss,\Imm,\Imm)$ and its transpose, respectively, using the specific structure $\xbar{\J}=\begin{bmatrix}
 \xbar{\C}^{\perp}\J_o^{\T} & \J_f^{\T}
\end{bmatrix}^{\T}$, as in the proof of Theorem~\ref{stabtheorem}, and making changes of variable $\xbar{\J}=\J^{-1}$, $\xbar{\Pp}=\xbar{\J} \Pp \xbar{\J}^{\T}$, $\xbar{\G}=\G \xbar{\J}^{\T}$, $\Ku_e=\Ematrix \Ss$, $\xbar{\Ku}_f = {\Ku}_f \xbar{\C} \J_{f}^{\T}$, we obtain condition~\eqref{main_disturb}. 

Hence, it follows that if relations (\ref{main_disturb}) and \eqref{eqRASdisturb} are satisfied then \eqref{ineq1disturb} is also satisfied, implying that $\Dot{\V}(x)-w^{\T}Rw<0$, for any $x $ in $ \varepsilon(\Pp,\beta)$ and $w \in \mathcal{W}$. Then all the items of Theorem~\ref{disturbtheorem} are proven.
\end{proof}

\subsection{Robust design}\label{sec:robust}
For simplicity of presentation, let us consider the case with $\B_w=0$, i.e., the system is not affected by disturbances (the extension is trivial). The following theorem presents LMI conditions for robust co-design of the allocator and anti-windup in the presence of uncertainty in the influence matrix \textcolor{black}{while assuring stability and performance aspects to the closed-loop system~\eqref{eq:CLsystemUncertain} with $\xbar{\B}_w=0$}. 

\begin{theorem} \label{robusttheorem}
Assume the existence of $n_{\alpha}$ matrices $\xbar{\Pp}_i $ in $\mathbb{S}_n^{+}$, matrices ${\J_o}$ in $ \reals ^{(n_p+n_c) \times (n_p+n_c)}$, ${\J_f}$ in $ \reals ^{n_f \times n}$, $\xbar{\Ku}_f $ in $ \reals ^{n_f \times n_f}$, $\Ku_e $ in $ \reals ^{(n_c+n_f) \times m_a}$, $\xbar{\G} $ in $ \reals ^{m_a \times n}$, diagonal matrix ${\Ss}={\Ss}^{\T} $ in $\mathbb{S}_{m_a}^{+}$ and positive scalar $\gamma$ such that 
\begin{equation}\label{main_matineq:robust}
\Psi_{i} = \left[
\setlength\arraycolsep{2pt}
\begin{array}{c|c}
    \Psi_{a_{i}} & \Psi_{b_{i}} \\
    \hline
    \star & \Psi_c
\end{array}
\right] =
\left[
\setlength\arraycolsep{2pt}
\begin{array}{cc|cc}
-\xbar{\J}-\xbar{\J}^{\T} & \Psi_{12_{i}} & \Psi_{13_{i}} & 0 \\
   \star  & \Psi_{22_{i}}  & \Psi_{23_{i}} &  \xbar{\J} \C^{\T}\hspace{-0.1cm}\W^{\frac{1}{2}} \\
    \hline
\star  &  \star & -2\Ss & \Ss \W^{\frac{1}{2}}  \\
   \star & \star  &  \star &  -\gamma \Imm 
\end{array}
\right] \prec 0
\end{equation}
\begin{equation}
\label{eqRAS:robust}
\begin{bmatrix}
\xbar{\Pp}_{i} & \xbar{\G}_{\left(j\right)}^\T \\
\star & \xbar{u}^2_{\left(j\right)} \\
\end{bmatrix} \succeq 0, \text{ for } j = 1, ..., m_a, 
\end{equation}
\noindent hold for all $i \in \{1,...,n_{\alpha}\}$, with $\Psi_{12_{i}}=\xbar{\Pp}_{i} + \A_{i} \xbar{\J}^{\T} \hspace{-0.1cm}+ \Z  -\xbar{\J}$, $\Z = $diag$(0_{n_p+n_c},\xbar{\Ku}_f)$, $\Psi_{13_{i}} = \B_{i} \Ss + \Lm \Ku_e$, $\Psi_{22_{i}}=\text{He}\{\A_{i}\xbar{\J}^{\T}\hspace{-0.1cm}+\Z\}$, $\Psi_{23_{i}} = \Psi_{13_{i}}-\xbar{\G}^{\T}\hspace{-0.15cm}-\xbar{\J} \C^{\T}$,  and where $\xbar{\J}=\begin{bmatrix}
 \xbar{\C}^{\perp}\J_o^{\T} & \J_f^{\T}
\end{bmatrix}^{\T} \hspace{-0.2cm}$ in $\reals^{n \times n}$, $\xbar{\C}^{\perp} \hspace{-0.1cm}$ in $\reals^{n \times (n_p+n_c)}$ is a matrix such that $\xbar{\C}\hspace{0.05cm}\xbar{\C}^{\perp} = 0$. Then, matrices $\Ematrix=\begin{bmatrix} \Ematrix_c^{\T}  &  \Ematrix_f ^{\T} \end{bmatrix}^{\T}  = \Ku_e \Ss^{-1}$, $\Ku_f=\xbar{\Ku}_f {\left(\xbar{\C}\J_f^{\T}\right)}^{-1} \hspace{-0.2cm}$ are such that, for any $\theta \in \Theta$:
\begin{enumerate}
    \item the closed-loop system \eqref{eq:CLsystemUncertain} with $w=0$ is asymptotically stable in the ellipsoid $\varepsilon(\Pp(\theta),1)=\{ x $ in $ \reals^{n}; x^{\T} \Pp(\theta) x \leq 1 \}$, with $\Pp(\theta) = \sum_{i=1}^{n_\alpha} \alpha_i \Pp_i$, where $\Pp_i=\J \xbar{\Pp}_i \J^{{\T}}$ and $\J = \xbar{\J}^{-1}$;
    \item the energy of the actuators usage signal is limited and given by
    \begin{equation*}
        \bigintssss_0^\infty sat(y_f(\tau))^{\T} \W sat(y_f(\tau)) d \tau \leq \gamma.
    \end{equation*}
\end{enumerate}
\end{theorem}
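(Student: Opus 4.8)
The plan is to carry out the argument of Theorem~\ref{stabtheorem} (equivalently, of~\cite{AlvesLima2021}) vertex by vertex and then glue the vertices together by convexity, exploiting the fact that, among the matrices appearing in~\eqref{eq:CLsystemUncertain}, only the Lyapunov matrix and the drift/input matrices $\mathcal{A}(\theta)$, $\mathcal{B}(\theta)$ depend on $\theta$, that all of them enter the LMIs affinely, and that the slack matrix $\xbar{\J}$ (hence $\J=\xbar{\J}^{-1}$) together with the decision variables $\Ss$, $\Ku_e$, $\xbar{\G}$, $\xbar{\Ku}_f$, $\gamma$ is shared by all vertices.

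First I would establish the convexification. Multiplying~\eqref{main_matineq:robust} by $\alpha_i(\theta)\ge0$ and summing over $i=1,\dots,n_\alpha$, and using $\sum_i\alpha_i=1$ with the affine dependence of $\Psi_i$ on $(\A_i,\B_i,\xbar{\Pp}_i)$, gives $\Psi(\theta):=\sum_{i=1}^{n_\alpha}\alpha_i\Psi_i\prec0$ for every $\alpha(\theta)\in\Omega$, hence for every $\theta\in\Theta$ by Assumption~\ref{assumption1}; here $\Psi(\theta)$ is exactly the matrix~\eqref{eq:Psi} with $\A$, $\B$, $\xbar{\Pp}$ replaced by $\mathcal{A}(\theta)$, $\mathcal{B}(\theta)$ and $\xbar{\Pp}(\theta):=\sum_i\alpha_i\xbar{\Pp}_i$. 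In the same way, summing~\eqref{eqRAS:robust} over $i$ yields $\begin{bmatrix}\xbar{\Pp}(\theta)&\xbar{\G}_{(j)}^\T\\\star&\xbar{u}^2_{(j)}\end{bmatrix}\succeq0$ for $j=1,\dots,m_a$, which, exactly as in the proof of Theorem~\ref{stabtheorem}, certifies $\varepsilon(\Pp(\theta),1)\subseteq\Lap(\bar{u})$ with $\Pp(\theta)=\J\xbar{\Pp}(\theta)\J^\T=\sum_i\alpha_i\Pp_i\succ0$. Feasibility of~\eqref{main_matineq:robust} also forces the $(1,1)$ block $-\xbar{\J}-\xbar{\J}^\T\prec0$, so $\xbar{\J}$ (and $\xbar{\C}\J_f^\T$) is nonsingular and $\J$ is well defined.

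Next, fixing $\theta\in\Theta$, I would run the chain of manipulations of Theorems~\ref{stabtheorem}--\ref{disturbtheorem} backwards on $\Psi(\theta)\prec0$: undo the changes of variable $\xbar{\J}=\J^{-1}$, $\xbar{\Pp}(\theta)=\xbar{\J}\Pp(\theta)\xbar{\J}^\T$, $\xbar{\G}=\G\xbar{\J}^\T$, $\Ku_e=\Ematrix\Ss$, $\xbar{\Ku}_f=\Ku_f\xbar{\C}\J_f^\T$ (the block structure $\xbar{\J}=\begin{bmatrix}\xbar{\C}^{\perp}\J_o^\T&\J_f^\T\end{bmatrix}^\T$ with $\xbar{\C}\,\xbar{\C}^{\perp}=0$ being what makes $\Lm_f\Ku_f\xbar{\C}\xbar{\J}^\T$ collapse onto $\Z=\mathrm{diag}(0_{n_p+n_c},\xbar{\Ku}_f)$), undo the congruence by $\mathrm{diag}(\J^{-1},\J^{-1},\Ss,\Imm)$, and undo the Schur complement on the $-\gamma\Imm$ block. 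This returns the Finsler-type inequality $\Upsilon(\theta)+\mathfrak{I}\Gamma(\theta)+\Gamma(\theta)^\T\mathfrak{I}^\T\prec0$, with parameter-free multiplier $\mathfrak{I}=\begin{bmatrix}\J^\T&\J^\T&0\end{bmatrix}^\T$, with $\Gamma(\theta)=\begin{bmatrix}-\Imm&\mathcal{A}(\theta)+\Lm_f\Ku_f\xbar{\C}&\mathcal{B}(\theta)+\Lm\Ematrix\end{bmatrix}$, and with $\Upsilon(\theta)$ the $\theta$-analogue of the matrix $\Upsilon$ from the proof of Theorem~\ref{disturbtheorem} (only $\Pp$ becomes $\Pp(\theta)$). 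Then, by Lemma~\ref{lemma:Finsler} applied with $\zeta=\begin{bmatrix}\dot{x}^\T&x^\T&\varphi(y_f)^\T\end{bmatrix}^\T$ constrained by $\Gamma(\theta)\zeta=0$, i.e.\ by the dynamics of~\eqref{eq:CLsystemUncertain} with $w=0$, this is equivalent to $\dot{\V}(x)-2\varphi^\T(y_f)\Ss^{-1}[\varphi(y_f)+(\C+\G)x]+\gamma^{-1}sat(y_f)^\T\W\,sat(y_f)<0$ for all $x\neq0$, where $\V(x)=x^\T\Pp(\theta)x$. For $x\in\varepsilon(\Pp(\theta),1)\subseteq\Lap(\bar{u})$, Lemma~\ref{lemma:Sec} gives $-2\varphi^\T(y_f)\Ss^{-1}[\varphi(y_f)+(\C+\G)x]\ge0$, whence $\dot{\V}(x)+\gamma^{-1}sat(y_f)^\T\W\,sat(y_f)<0$ there; in particular $\dot{\V}(x)<0$ on $\varepsilon(\Pp(\theta),1)\setminus\{0\}$, so the ellipsoid is invariant and the origin is asymptotically stable with the claimed estimate (item~1), and integrating from $x(0)\in\varepsilon(\Pp(\theta),1)$ with $\V\ge0$, $\V(x(0))\le1$ gives $\int_0^\infty sat(y_f(\tau))^\T\W\,sat(y_f(\tau))\,d\tau\le\gamma$ (item~2). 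Since $\theta\in\Theta$ was arbitrary, both items hold for all $\theta\in\Theta$.

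The main obstacle — indeed the only step that is not a verbatim transcription of the nominal proof — is the convexification itself: since~\eqref{main_matineq:robust}--\eqref{eqRAS:robust} are imposed only at the $n_\alpha$ vertices, one must check that (i)~the multiplier $\mathfrak{I}$, equivalently $\xbar{\J}$, is chosen \emph{independently} of $\theta$, so that $\Upsilon(\theta)+\mathfrak{I}\Gamma(\theta)+\Gamma(\theta)^\T\mathfrak{I}^\T$ is genuinely affine in $(\Pp(\theta),\mathcal{A}(\theta),\mathcal{B}(\theta))$ and therefore a convex combination of its vertex values, and (ii)~the gain-dependent term $\Lm_f\Ku_f\xbar{\C}$ carries no $\theta$-dependence — which is the case because $\Ku_f$, $\xbar{\C}$ and $\J_f$ are built from the nominal data $\M_n$ alone — so that the linearizing substitution $\xbar{\Ku}_f=\Ku_f\xbar{\C}\J_f^\T$ is the same at every vertex and commutes with the convex combination. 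Once these two points are secured, the parameter-dependent Lyapunov function $x^\T\Pp(\theta)x$ does the rest, and the restriction $\xbar{\B}_w=0$ simply discards the $w$-related blocks of Theorem~\ref{disturbtheorem}.
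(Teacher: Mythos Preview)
Your proposal is correct and follows essentially the same approach as the paper's proof: convexify~\eqref{main_matineq:robust}--\eqref{eqRAS:robust} by taking the $\alpha$-weighted sum over the vertices, then reverse the Schur complement, congruence, and change-of-variable steps to recover the Finsler-type condition with parameter-independent multiplier $\mathfrak{I}$, and conclude via Lemmas~\ref{lemma:Sec} and~\ref{lemma:Finsler} with the parameter-dependent Lyapunov function $x^\T\Pp(\theta)x$. Your explicit identification of why the convexification is legitimate (common $\xbar{\J}$, $\theta$-free gain term $\Lm_f\Ku_f\xbar{\C}$) is a point the paper leaves more implicit, but the argument is otherwise the same.
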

\begin{proof}
    Note first that the satisfaction of inequalities~\eqref{main_matineq:robust} means that matrix $\xbar{\J}$ is non-singular. Consider then a quadratic Lyapunov function $\V(x)=x^{\T} \Pp(\theta) x$, $\Pp(\theta) = \sum_{i=1}^{n_\alpha} \alpha_i \Pp_i$, with $\Pp_i \succ 0 $ in $\mathbb{S}_n^{+}$. 

    Next, note that since $\alpha_i\geq0$ and $\sum \limits_{i=1}^{n_{\alpha}}  \alpha_i=1$ for $i=1,\dots,n_{\alpha}$, by multiplying all the terms of \eqref{main_matineq:robust} and~\eqref{eqRAS:robust} by \( \alpha_i\) and summing them up from \(i=1\) to \(i=N\), we obtain, respectively
    \begin{equation}
        \Psi(\theta) = \sum \limits_{i=1}^{n_{\alpha}} \alpha_i \Psi_{i} \prec 0, \text{ for all } \theta \in \Theta,
    \end{equation}
    and
\begin{equation}
\label{eqRAS:theta}
\begin{bmatrix}
\xbar{\Pp}(\theta) & \xbar{\G}_{\left(j\right)}^\T \\
\star & \xbar{u}^2_{\left(j\right)} \\
\end{bmatrix} \succeq 0, \text{ for all } \theta \in \Theta.
\end{equation}    
    The satisfaction of relation~\eqref{eqRAS:robust} thus ensures, for all $\theta \in \Theta$, the inclusion of the ellipsoid $\varepsilon(\Pp(\theta),1)=\{ x $ in $ \reals^{n}; x^{\T} \Pp(\theta) x \leq 1 \}$ in the set $\Lap(\bar{u})$ as defined in (\ref{polySet}) by using the changes of variables $\xbar{\G}=\G \xbar{\J}^{\T}$, $\Pp(\theta) =\J \xbar{\Pp}(\theta) \J^{{\T}}$ and $\J = \xbar{\J}^{-1}$. 
    
    Therefore, the satisfaction of relation~\eqref{eqRAS:robust} means that  Lemma~\ref{lemma:Sec} applies and one gets $-2\varphi^{\T}(y_f) \Ss^{-1} [\varphi(y_f)+\C x + \G x] \geq 0$, for any $x $ in $ \varepsilon(\Pp(\theta),1) \subseteq \Lap(\bar{u})$. 
    
    Then for $x \in \varepsilon(\Pp(\theta),1) \subseteq \Lap(\bar{u})$, one gets $\Dot{\V}(x) \leq \Dot{\V}(x)-2\varphi^{\T}(y_f) \Ss^{-1} [\varphi(y_f)+\C x - \G x] \leq \Dot{\V}(x)-2\varphi^{\T}(y_f) \Ss^{-1} [\varphi(y_f)+\C x + \G x]+\gamma^{-1}sat(y_f)^{\T} \W sat(y_f)$. 
    
    Hence to obtain  $\Dot{\V}(x)<0$ it suffices that
\begin{equation} \label{ineq1}
\begin{array}{c}
\Dot{\V}(x)-2\varphi^{\T}(y_f) \Ss^{-1} [\varphi(y_f)+(\C + \G)x]\\
+\gamma^{-1}sat(y_f)^{\T} \W sat(y_f)<0,
\end{array}
\end{equation}
\noindent with $\Ss \succ 0$. 
Consequently, $\Dot{\V}(x)+\gamma^{-1}sat(y_f)^{\T} \W sat(y_f)<0$ is also satisfied, which can be integrated resulting in
\begin{equation}\label{eq:erro}
   \gamma^{-1} \bigintssss_0^\infty \hspace{-0.1cm}sat(y_f(\tau))^{\T} \W sat(y_f(\tau)) d \tau  < \V(x(0)) \leq 1,
\end{equation}
\noindent which leads to item 2 of Theorem~\ref{robusttheorem}.

By using \eqref{DZident} and defining the augmented vector  $\zeta=\begin{bmatrix} \Dot{x}^\T \hspace{-0.1cm} & \hspace{-0.1cm} x^\T \hspace{-0.1cm} & \hspace{-0.1cm} \varphi(y_f)^\T \end{bmatrix} ^\T$, we can rewrite inequality \eqref{ineq1} as $\zeta^{\T} \Upsilon((\theta) \zeta<0$, with the matrix $\Upsilon(\theta)$ given by:
\begin{equation*} \label{Qdefinition}
 \begingroup 
\setlength\arraycolsep{0.5pt}
  \begin{bmatrix} 
  0  & ~~~\Pp(\theta) &  ~~~0 \\
  \star  &  ~~~\C^{\T} \W^{\frac{1}{2}} \gamma^{-1} \W^{\frac{1}{2}} \C  &  ~~~-(\G+\C)^{\T}\Ss^{-1}+ \C^{\T} \W^{\frac{1}{2}} \gamma^{-1} \W^{\frac{1}{2}} \\
  \star  &  ~~~\star & ~~~\W^{\frac{1}{2}} \gamma^{-1} \W^{\frac{1}{2}} -2\Ss^{-1}
  \end{bmatrix}.
  \endgroup
\end{equation*}
We also have that the relation $\Gamma(\theta)\zeta=0$ holds for 
\begin{equation} \label{Bbardefinition}
 \Gamma(\theta) = 
  \begin{bmatrix} 
  -\Imm & ~~~\A(\theta)+\Lm_f \Ku_f \xbar{\C} &  ~~~\B(\theta)+\Lm\Ematrix \\
   \end{bmatrix}.
\end{equation}
From Lemma \ref{lemma:Finsler}, by considering $\mathfrak{I} = \begin{bmatrix}
    \J^{\T} & \J^{\T} & 0 \end{bmatrix}^{\T}$, we obtain the new condition $\xbar{\Psi}(\theta)={\Upsilon}+\mathfrak{I} \Gamma(\theta)+\Gamma^{\T}(\theta)\mathfrak{I}^{\T} \prec 0$. By applying a Schur complement to $\xbar{\Psi}(\theta)$, followed by pre- and post-multiplying by diag$(\J^{-1},\J^{-1},\Ss,\Imm)$ and its transpose, respectively, and making changes of variable $\xbar{\J}=\J^{-1}$, $\xbar{\Pp}(\theta)=\xbar{\J} \Pp(\theta) \xbar{\J}^{\T}$, $\xbar{\G}=\G \xbar{\J}^{\T}$, $\Ku_e=\Ematrix \Ss$ we obtain the equivalent condition
\begin{equation*} \label{main_proof}
 \begingroup 
\setlength\arraycolsep{5pt}
  \begin{bmatrix}
   -\xbar{\J}-\xbar{\J}^{\T} & \xbar{\Pp} + (\A(\theta) + \Lm_f {\Ku}_f \xbar{\C}) \xbar{\J}^{\T} -\xbar{\J} & \B(\theta) \Ss + \Lm \Ku_e & 0 \\
   \star  & \text{He}\{(\A(\theta) + \Lm_f {\Ku}_f \xbar{\C}) \xbar{\J}^{\T}\}  & \B(\theta) \Ss + \Lm \Ku_e-\xbar{\G}^{\T}\hspace{-0.15cm}-\xbar{\J} \C^{\T}&  \xbar{\J} \C^{\T}\W^{\frac{1}{2}} \\
   \star  &  \star & -2\Ss & \Ss \W^{\frac{1}{2}}  \\
   \star & \star  &  \star &  -\gamma \Imm 
   \end{bmatrix} \endgroup \prec 0,
\end{equation*}
\noindent Note that $ \xbar{\C}^{\perp}$ in $\reals^{n \times (n_p+n_c)}$ is the orthogonal complement of $\xbar{\C}^{\T}$ (i.e. $\xbar{\C}~\xbar{\C}^{\perp}=0$ with $rank(\xbar{\C}) = n_f$) such that $\begin{bmatrix}
\xbar{\C}^{\perp} & \xbar{\C}^{\T}
\end{bmatrix}$ is square and nonsingular. Thanks to this, the specific structure $\xbar{\J}=\begin{bmatrix}
 \xbar{\C}^{\perp}\J_o^{\T} & \J_f^{\T}
\end{bmatrix}^{\T}$, with ${\J_o}$ in $ \reals ^{(n_p+n_c) \times (n_p+n_c)}$, ${\J_f}$ in $ \reals ^{n_f \times n}$ does not prevent the existence of $\J_o$ and $\J_f$ making $\xbar{\J}$ non singular.
Hence, such a structure for $\xbar{\J}$ allows to have $\Z = \Lm_f \begin{bmatrix}
    \xbar{\C}~\xbar{\C}^{\perp} {\J_o}^{\T}  &  {\Ku}_f \xbar{\C} \J_{f}^{\T}
\end{bmatrix}= diag(0_{n_p+n_c},\xbar{\Ku}_f)$ in the term $\Psi_{22}$, where the satisfaction of inequality \eqref{main_matineq} implies that $\xbar{\C}\J_f^{\T}$ is non-singular with a full row rank matrix $\J_{f}$ (i.e. $rank(\J_{f}) = n_f)$, allowing the computation of $\K_f$. Hence, it follows that if relations (\ref{main_matineq}) and \eqref{eqRAS} are satisfied then \eqref{ineq1} is also satisfied, or equivalently $\Dot{\V}(x)<0$, for any $x $ in $ \varepsilon(\Pp(\theta),1)$, for all $\theta \in \Theta$. Then the two items of Theorem~\ref{robusttheorem} are proven and the proof is completed.
\end{proof}

 \subsection{Remarks on feasibility issues}

Studying the feasibility properties of LMI conditions is an important subject. The following proposition about Theorem \ref{stabtheorem} was stated and proved in~\cite{AlvesLima2021}.
\begin{prop}\label{prop1}
LMI \eqref{main_matineq} in Theorem~\ref{stabtheorem} is always feasible.  
\end{prop}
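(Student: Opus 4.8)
The plan is to establish feasibility of \eqref{main_matineq} by exhibiting an explicit feasible point, exploiting the fact that the linear interconnection matrix $\A_0$ in \eqref{eq:A0} is Hurwitz (Remark~\ref{rem:linearcon}). The first step is to reduce the amount of coupling in $\Psi$ by picking the anti-windup-related design variables to be zero or small: set $\Ku_e = 0$ (hence $\Ematrix = 0$), $\xbar{\Ku}_f = 0$ (hence $\Z = 0$ and $\K_f = 0$), and $\xbar{\G} = 0$. With these choices $\Psi_{13} = \B\Ss$, $\Psi_{23} = \B\Ss - \xbar{\J}\C^{\T}$, $\Psi_{12} = \xbar{\Pp} + \A\xbar{\J}^{\T} - \xbar{\J}$, and $\Psi_{22} = \mathrm{He}\{\A\xbar{\J}^{\T}\}$. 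The structural constraint $\xbar{\J} = \begin{bmatrix}\xbar{\C}^{\perp}\J_o^{\T} & \J_f^{\T}\end{bmatrix}^{\T}$ must still be respected, but since $\begin{bmatrix}\xbar{\C}^{\perp} & \xbar{\C}^{\T}\end{bmatrix}$ is square and nonsingular, the map $(\J_o,\J_f)\mapsto\xbar{\J}$ is onto; so I am free to choose any nonsingular $\xbar{\J}$ I like, e.g. $\xbar{\J} = \epsilon^{-1}\Imm$ for a small scalar $\epsilon>0$, which automatically gives a valid $(\J_o,\J_f)$ and makes $\xbar{\C}\J_f^{\T}$ nonsingular.

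The second step is to choose $\xbar{\Pp}$ to kill the troublesome $(2,2)$ block. Because $\A = \mathrm{diag}(\A_0,0)$ has a Hurwitz-block-plus-zero-block structure, $\mathrm{He}\{\A\xbar{\J}^{\T}\}$ is only negative \emph{semi}definite in the best case (the last $n_f$ coordinates see the zero block), so I cannot rely on it directly; instead the standard trick is to apply a Schur complement on the $-\gamma\Imm$ block and on the $-2\Ss$ block, and then to dominate the remaining expression by choosing $\xbar{\Pp}$ large and $\Ss,\gamma$ appropriately. Concretely, after eliminating the last row/column via Schur complement against $-\gamma\Imm$ (valid for $\gamma>0$), the surviving $3\times 3$ block inequality involves $\xbar{\Pp}$ only in the $(1,2)$ and $(2,2)$ entries through $\A\xbar{\J}^{\T}$, the $-\xbar{\J}-\xbar{\J}^{\T}$ term in the $(1,1)$ slot, and a $\gamma^{-1}$-weighted rank term. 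I would then take $\Ss = s\Imm$ and $\gamma = \gamma_0$ fixed, $\xbar{\J}=\epsilon^{-1}\Imm$, and let $\epsilon\to 0$: the $(1,1)$ block $-2\epsilon^{-1}\Imm$ dominates, and on the orthogonal complement of its kernel (which is trivial) the whole matrix becomes negative definite provided $\xbar{\Pp}$ is chosen, say, equal to $\Imm$ and $s$ small enough that $-2s\Imm$ dominates its own off-diagonal couplings. This is exactly the kind of "make the He$\{-\xbar{\J}\}$ term dominant" argument that underlies descriptor-form LMI feasibility proofs.

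The main obstacle, and the step I would spend the most care on, is handling the zero block of $\A$: the $(2,2)$ diagonal sub-block of $\Psi_{22} = \mathrm{He}\{\A\xbar{\J}^{\T}\}$ corresponding to $x_f$ is $\mathrm{He}\{0\cdot(\cdot)\} = 0$, so negative definiteness of the $(2,2)$ block alone fails and one genuinely needs the cross term with the $(1,1)$ block $-\xbar{\J}-\xbar{\J}^{\T}$ (this is why the descriptor/Finsler formulation with the extra $\Dot{x}$ coordinate is used in the first place). The clean way to see it works is to go back to the pre-Schur, pre-change-of-variables inequality $\Upsilon + \mathfrak{I}\Gamma + \Gamma^{\T}\mathfrak{I}^{\T}\prec 0$ from the proof of Theorem~\ref{stabtheorem}: with $\Ematrix=0$, $\K_f=0$, $\G=0$ this reads, in the $\zeta=(\Dot x, x, \varphi)$ coordinates, as a matrix whose $(x,x)$ block is $\mathrm{He}\{\Pp\A\} + \gamma^{-1}\C^{\T}\W\C$-type terms plus sector contributions — and since $\A=\mathrm{diag}(\A_0,0)$ with $\A_0$ Hurwitz, there \emph{is} a $\Pp\succ 0$ (of block form, with a possibly large block on the $x_f$ coordinates multiplying a small drift) solving the nominal Lyapunov inequality once the deadzone is handled by the sector bound and $\gamma$ is taken large enough to absorb $\gamma^{-1}\C^{\T}\W\C$; then $\mathfrak{I}=\begin{bmatrix}\J^{\T} & \J^{\T} & 0\end{bmatrix}^{\T}$ with $\J$ small makes the full $\zeta$-matrix negative definite by a Schur/continuity argument. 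I would organize the final write-up as: (i) set the anti-windup and gain variables to zero and $\xbar{\G}=0$; (ii) pick $\Ss=s\Imm$, $\gamma$ large, $\xbar{\Pp}$ adapted to the block structure of $\A$, and $\xbar{\J}$ a small multiple of identity compatible with the structural constraint; (iii) verify $\Psi\prec 0$ by Schur complements reducing to the Hurwitz Lyapunov inequality for $\A_0$ plus dominated perturbation terms; (iv) note \eqref{eqRAS} is irrelevant to the claim since only \eqref{main_matineq} is asserted to always hold (indeed with $\xbar{\G}=0$ it reduces to $\mathrm{diag}(\xbar{\Pp},\xbar u_{(i)}^2)\succeq 0$, trivially true).
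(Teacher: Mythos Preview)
There is a genuine gap, located precisely at the step you yourself flag as ``the main obstacle.'' Setting $\xbar{\Ku}_f=0$ (hence $\Z=0$) is fatal, not merely delicate. With $\Z=0$ the $(2,2)$ block of $\Psi$ is $\Psi_{22}=\mathrm{He}\{\A\xbar{\J}^{\T}\}$, and since $\A=\mathrm{diag}(\A_0,0_{n_f})$ the last $n_f$ rows of $\A\xbar{\J}^{\T}$ vanish identically; consequently the bottom-right $n_f\times n_f$ diagonal subblock of $\Psi_{22}$ is exactly $0$. But every diagonal block of a negative definite matrix must itself be negative definite, so $\Psi\prec 0$ forces $\Psi_{22}\prec 0$, which is impossible with a zero diagonal subblock---for \emph{any} choice of $\xbar{\J}$, $\xbar{\Pp}$, $\Ss$, $\gamma$. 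Your appeal to ``a possibly large block on the $x_f$ coordinates multiplying a small drift'' does not apply: with $\K_f=0$ the drift in the $x_f$ coordinates is identically zero (not small), the linear closed loop $\A+\Lm_f\K_f\xbar{\C}=\A$ has $n_f$ eigenvalues at the origin, and no quadratic Lyapunov function can certify asymptotic stability. The sector term cannot rescue this either, since $\varphi(y_f)=0$ throughout the unsaturated region.

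The missing idea is to keep $\xbar{\Ku}_f$ nonzero. The full closed-loop matrix $\A+\Lm_f\K_f\xbar{\C}$ is block lower triangular with diagonal blocks $\A_0$ and $\K_f\N^{\T}\W\N$, hence Hurwitz as soon as $\K_f\N^{\T}\W\N$ is (e.g.\ $\K_f=-(\N^{\T}\W\N)^{-1}$); equivalently, at the LMI level a choice such as $\xbar{\Ku}_f$ with $\mathrm{He}\{\xbar{\Ku}_f\}\prec 0$ supplies the missing negativity in $\Psi_{22}$ through $\Z=\mathrm{diag}(0,\xbar{\Ku}_f)$. This is the ingredient on which the argument in \cite{AlvesLima2021} (to which the present paper defers for the proof) hinges. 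A secondary error: your claim that the structured map $(\J_o,\J_f)\mapsto\xbar{\J}$ is onto is false. The first $n_p+n_c$ rows of $\xbar{\J}$ are of the form $\J_o(\xbar{\C}^{\perp})^{\T}$ and therefore lie, as vectors, in $\ker\xbar{\C}$; thus $\xbar{\J}=\epsilon^{-1}\Imm$ is admissible only when the first $n_p+n_c$ columns of $\xbar{\C}$ vanish, i.e.\ $\N^{\T}\W\Mp=0$, which fails generically. You must work with the structure of $\xbar{\J}$ directly rather than collapsing it to a scalar multiple of the identity.
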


Two new propositions can be stated about the feasibility of the new results in this chapter, i.e., Theorems~\ref{disturbtheorem} and~\ref{robusttheorem}.

\begin{prop}
    If $\R$ is considered a variable instead of given by the disturbance model,~\eqref{main_disturb} would remain an LMI, which is always feasible. 
\end{prop}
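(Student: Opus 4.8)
The plan is to reduce the claimed statement to the feasibility fact already available from Theorem~\ref{stabtheorem} via Proposition~\ref{prop1}. First I would observe that $\Psi_w$ in~\eqref{main_disturb} has the block structure
\[
\Psi_w = \begin{bmatrix} \Psi & \mathcal{V} \\ \star & -\R \end{bmatrix},
\qquad \mathcal{V} = \begin{bmatrix} \xbar{B}_w & \xbar{B}_w & 0 & 0 \end{bmatrix}^{\T},
\]
so that, once $\R$ is treated as a free decision variable, the only nonlinearity in the entries is already present in $\Psi$ itself; the new block $\mathcal{V}$ is linear in the (now variable) constant data $\xbar B_w$-terms, and $-\R$ is linear in $\R$. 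Hence~\eqref{main_disturb} with $\R$ variable is an LMI in $\{\xbar{\Pp},\J_o,\J_f,\xbar{\Ku}_f,\Ku_e,\xbar{\G},\Ss,\gamma,\R\}$. This settles the ``would remain an LMI'' half of the claim.

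For feasibility, the key step is a Schur-complement argument: for any fixed value of the other variables making $\Psi\prec 0$, and for any choice of $\R$ with $\R\succ 0$, one has
\[
\Psi_w \prec 0 \iff \Psi + \mathcal{V}\,\R^{-1}\,\mathcal{V}^{\T} \prec 0 .
\]
Since $\Psi\prec 0$ is strict and $\mathcal{V}\R^{-1}\mathcal{V}^{\T}\succeq 0$ with norm that can be made arbitrarily small by scaling $\R$ up (i.e.\ replacing $\R$ by $\lambda\R$ for large $\lambda>0$), there always exists $\R$ large enough that $\Psi+\mathcal{V}\R^{-1}\mathcal{V}^{\T}\prec 0$. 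Therefore it suffices to exhibit one feasible point of $\Psi\prec 0$: this is exactly Proposition~\ref{prop1}, which guarantees that~\eqref{main_matineq} is always feasible. Picking such a feasible tuple and then any sufficiently large $\R\succ 0$ yields a feasible point of~\eqref{main_disturb} with $\R$ variable, proving the proposition.

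One subtlety worth spelling out is that the statement concerns only~\eqref{main_disturb}, not the full set of conditions~\eqref{main_disturb}--\eqref{eq:muandsigma} of Theorem~\ref{disturbtheorem}; in particular~\eqref{eqRASdisturb} and the coupling $\sigma-\mu\geq 0$ are not claimed to be jointly satisfiable here, so no attention to $\mu$ or the ellipsoid-inclusion inequalities is needed. The main obstacle — and it is a mild one — is making rigorous the ``scale $\R$ up'' step while keeping $\Psi$ untouched: one must note that $\Psi$ does not depend on $\R$, so enlarging $\R$ changes only the Schur term $\mathcal{V}\R^{-1}\mathcal{V}^{\T}$, and that $\|\mathcal{V}(\lambda\R)^{-1}\mathcal{V}^{\T}\| = \lambda^{-1}\|\mathcal{V}\R^{-1}\mathcal{V}^{\T}\| \to 0$. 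Combined with $\Psi\prec 0$ being an open condition, this gives $\Psi_w\prec 0$ for all $\lambda$ large, completing the argument.
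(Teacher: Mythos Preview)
Your proposal is correct and follows essentially the same approach as the paper: the paper's own proof simply states that it mirrors Proposition~\ref{prop1} with ``an additional Schur complement argument based on a large enough positive definite matrix~$\R$,'' which is precisely the mechanism you spell out (feasibility of $\Psi\prec 0$ from Proposition~\ref{prop1}, then Schur complement and scale $\R$ up so that $\mathcal{V}\R^{-1}\mathcal{V}^{\T}$ becomes negligible). Your added remark that the claim concerns only~\eqref{main_disturb} and not the auxiliary conditions~\eqref{eqRASdisturb}--\eqref{eq:muandsigma} is a useful clarification that the paper leaves implicit.
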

\begin{proof}
    The proof follows almost the same steps as the proof of Proposition~\ref{prop1}, with an additional Schur complement argument based on a large enough positive definite matrix~$\R$. Therefore, it is not repeated here. 
\end{proof}

\begin{prop}
  LMI condition~\eqref{main_matineq:robust} in Theorem~\ref{robusttheorem} is always feasible if, for each $i \in \{1,...,n_{\alpha}\}$, the corresponding matrix $A_i$ is Hurwitz.  
\end{prop}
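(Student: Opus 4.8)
The plan is to mirror the feasibility argument behind Proposition~\ref{prop1}, the only structural novelty being that in~\eqref{main_matineq:robust} the Lyapunov-type certificates $\xbar{\Pp}_i$ may depend on the vertex index $i$, while the slack variable $\xbar{\J}$ (through $\J_o,\J_f$), the allocator gain $\xbar{\Ku}_f$ (hence $\K_f$), the anti-windup block $\Ku_e$, the matrix $\xbar{\G}$, the diagonal $\Ss$ and the scalar $\gamma$ are common to all $n_{\alpha}$ vertices. I will use throughout the identity, established in the proof of Theorem~\ref{robusttheorem}, that with the prescribed structure of $\xbar{\J}$ and $\Z=\mathrm{diag}(0,\xbar{\Ku}_f)$ one has $\A_i\xbar{\J}^{\T}+\Z=\A_{cl,i}\xbar{\J}^{\T}$, where $\A_{cl,i}=\A_i+\Lm_f\K_f\xbar{\C}$ is the vertex matrix of the linear interconnection plant, controller and allocator (no anti-windup, no saturation). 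Note also that only~\eqref{main_matineq:robust} is at stake, so $\xbar{\G}$ is entirely free.

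First I would reduce $\Psi_i\prec0$ to a pure stabilisation condition. Set $\Ku_e=0$ (so $\Ematrix=0$) and $\xbar{\G}=-\C\xbar{\J}^{\T}$ (so that, after the change of variables, $\G=-\C$, which makes $\Psi_{13_i}=\Psi_{23_i}=\delta\B_i$ and cancels every $\Ss^{-1}$-weighted cross term in the dissipation inequality~\eqref{ineq1}), and take the diagonal $\Ss=\delta\Imm$ with $\delta>0$. Applying a Schur complement on the $(4,4)$ block $-\gamma\Imm$ and then on the $(3,3)$ block $-2\delta\Imm$, one finds that $\Psi_i\prec0$ holds for all sufficiently large $\gamma$ and all sufficiently small $\delta$ provided the dilated Lyapunov inequality
\begin{equation*}
\begin{bmatrix}
-\xbar{\J}-\xbar{\J}^{\T} & \xbar{\Pp}_i+\A_{cl,i}\xbar{\J}^{\T}-\xbar{\J}\\
\star & \mathrm{He}\{\A_{cl,i}\xbar{\J}^{\T}\}
\end{bmatrix}\prec0
\end{equation*}
holds for every $i$, with a common invertible $\xbar{\J}$ of the required structure and vertex-wise $\xbar{\Pp}_i\succ0$; since there are only $n_{\alpha}$ vertices, one fixed pair $(\delta,\gamma)$ serves all of them. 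So the problem collapses to producing such a common slack variable and such certificates.

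Next I would invoke the Hurwitz hypothesis. Since the allocator feedback acts only on the $x_f$-channel, $\A_{cl,i}$ coincides with $\A_i$ except that its (otherwise identically zero) bottom $n_f$ rows become $\K_f\N^{\T}\W\,[\,\Mp\Dmatrix_c\C_p\ \ \Mp\C_c\ \ \N\,]$, which is vertex-independent; choosing $\K_f=-\kappa(\N^{\T}\W\N)^{-1}$ makes the $x_f$-subsystem arbitrarily fast and stable, and a block-triangular / singular-perturbation argument shows that, for one sufficiently large $\kappa$, Hurwitzness of each $\A_i$ (equivalently of its $(n_p+n_c)$-dimensional plant--controller block, the allocator channel being a pure integrator) propagates to Hurwitzness of every $\A_{cl,i}$. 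One then selects the structured $\xbar{\J}$ through $\J_o,\J_f$ --- with $\xbar{\C}\J_f^{\T}$ invertible, so that $\K_f=\xbar{\Ku}_f(\xbar{\C}\J_f^{\T})^{-1}$ is recoverable, and with the block of $\xbar{\J}$ acting on the fast channel taken small --- and the $\xbar{\Pp}_i$ as (suitably scaled) Lyapunov solutions, invoking the standard extended-LMI characterisation of Hurwitz stability to satisfy the displayed dilated inequality simultaneously at all vertices. Undoing the reductions then yields $\Psi_i\prec0$ for all $i$, which establishes the claim.

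The main obstacle is precisely this last construction: $\xbar{\J}$ and $\xbar{\Ku}_f$ (hence $\K_f$) must be the \emph{same} at every vertex, so one cannot stabilise each $\A_i$ with its own gain and its own slack variable; the argument has to exploit that the allocator channel on which $\K_f$ acts is vertex-independent and can be rendered uniformly fast, which is what allows the per-vertex Hurwitz assumption to be lifted to a single common dilated certificate.
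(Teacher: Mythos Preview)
The paper's proof is a one-line pointer to Proposition~\ref{prop1}, so there is little to compare against; your proposal follows the same overall strategy (fix $\Ku_e,\xbar{\G},\Ss$, peel off the performance and sector blocks by Schur complements with large $\gamma$ and small $\delta$, and reduce~\eqref{main_matineq:robust} to a dilated Lyapunov inequality for the linear interconnection) and goes considerably further in detail. The reductions you perform are correct.

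There is, however, a genuine gap in your final step --- one that the paper's one-liner does not confront either. After the reduction you correctly arrive at a dilated inequality that must hold at every vertex with a \emph{common} structured $\xbar{\J}$ (and common $\xbar{\Ku}_f$, hence common $\K_f$) but vertex-wise $\xbar{\Pp}_i$. Your fast-allocator / singular-perturbation argument, via a large $\kappa$, establishes only that every $\A_{cl,i}$ is individually Hurwitz; it does not produce a single $\xbar{\J}$ for which $\mathrm{He}\{\A_{cl,i}\xbar{\J}^{\T}\}\prec0$ holds simultaneously at all vertices --- and this is already a necessary consequence of the $(2,2)$ block of your displayed dilated inequality. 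Vertex-wise Hurwitzness is strictly weaker than the existence of such a common slack, so ``invoking the standard extended-LMI characterisation of Hurwitz stability'' does not close the argument. To complete the proof you would need to exhibit explicitly a structured $\xbar{\J}=[\xbar{\C}^{\perp}\J_o^{\T};\J_f]$, together with the $\xbar{\Pp}_i$, that works uniformly in $i$; the time-scale-separation idea (make the vertex-independent allocator channel dominate) is a reasonable lead, but the construction is not carried out. You yourself flag this as ``the main obstacle'', and it remains one.

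A minor point: the literal hypothesis ``$\A_i$ Hurwitz'' is vacuous, since $\A_i=\A+\xbar{\B}\M_i\C$ has identically zero bottom $n_f$ rows and therefore at least $n_f$ eigenvalues at the origin. Your reading of it as Hurwitzness of the $(n_p+n_c)$-dimensional plant--controller sub-block is the only sensible one, but calling this ``equivalently'' is too strong.
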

\begin{proof}
    The proof follows the same steps as the proof of Proposition~\ref{prop1}, thus, it is not repeated here. 
\end{proof}

Additionally, the subsequent two remarks are worthy of note.
\begin{remark}[On the choice of matrix $\W$]
From Remark 3 and Theorems~1,~2,~and~3, it is clear that the entries of the matrix $\W$ are inversely proportional to the level of usage of the actuator. Although the user can specify any desired value $w_i>0$, one promising choice in the case the level of saturation of the different actuators is different 
is to make $w_i=\xbar{u}_{(i)}^{-2}$.
\end{remark}
\begin{remark}[Global stability]\label{rem:global}
In case the plant state matrix $\A_p$ is Hurwitz stable, global stability of the closed loop can be achieved and the design of $\Ku_f$, $\Ematrix_f$, $\Ematrix_c$ can also be realized by solving LMI~\eqref{main_matineq} with $\xbar{\G} = 0$. 
\end{remark}

\section{Optimization issues}\label{sec:optim}
From~\eqref{eq:erro} and~\eqref{eq:erro_disturb}, it becomes clear that minimization of $\gamma$ leads to minimization of the energy of $sat(y_f(t))$. Therefore, while solving the LMIs in Theorems \ref{stabtheorem} and~\ref{disturbtheorem} (or in Remark \ref{rem:global}), we can accomplish better results for the allocator by minimizing $\gamma$. In the case of Theorem~\ref{stabtheorem}, the maximization of the size of the ellipsoid $\varepsilon(\Pp,1)$ is also of interest (or of $\varepsilon(\Pp,\mu)$ for Theorem~\ref{disturbtheorem}). Therefore, a multi-objective optimization procedure applies. Consider a positive definite matrix $\Pp_0$ and the following matrix inequality
\begin{equation}\label{eq:maxP}
    \begin{bmatrix}
        \Pp_0 & ~~\Imm \\
        \star & ~~\xbar{\J}+\xbar{\J}^{\T} -\xbar{\Pp}
        \end{bmatrix} \succeq 0.
\end{equation}
Then, minimization of the trace of $\Pp_0$ indirectly leads to minimization of the trace of $\Pp$ and, therefore, to maximization of the ellipsoid $\varepsilon(\Pp,1)$. Consider weighting parameters $\rho_1$, $\rho_2$. Then the following optimization procedure takes place in case of Theorem \ref{stabtheorem} 
\begin{equation}\label{eq:opt1}
        \text{min } \left(\rho_1 \lambda + \rho_2 \gamma \right) \text{~subject to \eqref{main_matineq}, \eqref{eqRAS}, \eqref{eq:maxP}}, \Pp_0 \preceq \lambda \Imm
\end{equation}

In case global asymptotic stability is sought (Remark \ref{rem:global}), the following optimization procedure applies
\begin{equation}\label{eq:opt2}
        \text{min } \gamma \text{~subject to \eqref{main_matineq} with $\xbar{\G} = 0$}
\end{equation}

In the case of systems affected by external disturbances, i.e., of Theorem~\ref{disturbtheorem}, we can also try to minimize $\mu$ in order to maximize the set $\varepsilon(\Pp,\mu)$. Then, the following optimization procedure takes place,
\begin{equation}\label{eq:opt3}
        \text{min } \left(\rho_1 \lambda + \rho_2 \gamma + \rho_3 \mu \right) \text{~subject to \eqref{main_disturb}, \eqref{eqRASdisturb}, \eqref{eq:muandsigma}, \eqref{eq:maxP}}, \Pp_0 \preceq \lambda \Imm
\end{equation}
where $\rho_3$ is an additional weighting parameter. Finally, it is also worth noting that an iterative procedure using, for example, a line search can be used in order to minimize the value of $\sigma$, which equates to increasing the upper bound on the disturbance signals and thus enlarging the set of allowable disturbances $\mathcal{W}$. In the case of zero initial conditions, $x(0)=0$, it follows that $\sigma^{-1}=\mu^{-1}$, and one can estimate the maximization of the set $\mathcal{W}$ for when the system is at equilibrium.

\begin{remark}
    Optimization procedures similar to~\eqref{eq:opt1} can be applied to maximize the ellipsoid $\epsilon(\Pp(\theta),1)$, for all $\theta \in \Theta$, and minimize the energy of the actuators usage. In particular, considering weighting parameters $\rho_1,\rho_2$, the following optimization procedures takes place in the case of Theorem~\eqref{robusttheorem}
\begin{equation}\label{eq:opt8}
        \text{min } \left(\rho_1 \lambda + \rho_2 \gamma \right) \text{~subject to \eqref{main_matineq:robust}, \eqref{eqRAS:robust}}, \begin{bmatrix}
        \Pp_0 & ~~\Imm \\
        \star & ~~\xbar{\J}+\xbar{\J}^{\T} -\xbar{\Pp}_i
        \end{bmatrix} \succeq 0, \Pp_0 \preceq \lambda \Imm
\end{equation}
for all $i \in \{1,...,n_{\alpha}\}$. In case global asymptotic stability is sought, it suffices to minimize $\gamma$ subject to~\eqref{main_matineq:robust} with $\xbar{\G} = 0$.
\end{remark}

\section{Numerical results}\label{sec:simu}

In this section, we consider the satellite formation flying control problem from~\cite{Boada2013}, which also was studied in~\cite{AlvesLima2021}. The relative position between two satellites on a vertical axis is represented by the controlled output $y_p$. Given two satellites, the objective is to cancel the lateral position error between them in the $z-axis$ and 
the following model is considered
\begin{equation*}
   \left[
\begin{array}{c|c}
    \A_p & \B_p \\
    \hline
    \C_p & \Dmatrix_p
\end{array}
\right] =
\left[
\setlength\arraycolsep{5pt}\def\arraystretch{0.92}
\begin{array}{cc|cc}
 0 & 1 & 0 & 0 \\ 0 & 0 & m_1^{-1} & -m_2^{-1} \\
    \hline
1 & 0 & 0 & 0
\end{array}
\right],
\end{equation*}

\noindent with $m_1^{-1}$ and $m_2^{-1}$ the masses of the two satellites. The plant input is defined from forces $F_1$ and $F_2$  individually acting in each satellite and is given by $u_p=\left[\begin{array}{c}
     u_{p_1}  \\
     u_{p_2} 
\end{array}\right] =
\left[\begin{array}{c}
     F_{1}  \\
     F_{2} 
\end{array}\right]$.  Each satellite possesses 4 thrusters that jointly produce the force applied in each of them. We then consider the known part of the influence matrix given by $\M_n=\left[\begin{array}{cc}
     \M_{n_1} & 0  \\
     0 & \M_{n_2} 
\end{array}\right]$, with $\M_{n_1} = \M_{n_2} = \left[\begin{array}{cccc}
     1 & -1 & -1 & 1 
\end{array} \right]$. We assume that each thruster can produce a force between $0 ~mN$ and $100 ~mN$, therefore the saturation limits are not symmetric. We use the symmetrizing technique proposed in \cite{Boada2013}, which consists of substituting the asymmetric saturation by a symmetric one with limits $\xbar{u}_{i} = 50 ~mN, i = 1, \dots, 8$, followed by addition of the kernel symmetrizing vector $\xi = \xbar{u}$. We are then able to co-design the dynamic allocation device ($\mathcal{F}$ given by \eqref{eq:alloc}) and the anti-windup gain $\Ematrix_c$. Upon choosing $m_1=m_2=1000~kg$, a stabilizing LQG controller is designed using identity matrices for all the weights, as in~\cite{AlvesLima2021}. The resulting controller is given by 
\begin{equation*}
   \left[
\begin{array}{c|c}
    \A_c & \B_c \\
    \hline
    \C_c & \Dmatrix_c
\end{array}
\right] =
\left[
\setlength\arraycolsep{5pt}
\begin{array}{cc|c}
 -1.7321  & 1 & 1.7321 \\
 -1.0014 & -0.0532 & 1 \\
    \hline
-0.7071  & -26.6009 & 0 \\
0.7071  & 26.6009 & 0
\end{array}
\right].
\end{equation*}

We then compute $\Mp$=0.25diag$(\M_{n_1}^{\T},\M_{n_2}^{\T})$, $\N=\text{diag}(\N_1,\N_2)$, with $
    \N_1\hspace{-0.1cm}=\hspace{-0.1cm}\N_2\hspace{-0.1cm}=\hspace{-0.1cm}\begin{bmatrix}
1 & 1 & -1\\ 
& \Imm_3 &
\end{bmatrix}$.

\subsection{Example 1 - Perturbed system}

To illustrate the results from Section~\ref{sec:disturb}, we revisit the satellite example by adding a disturbance through a matrix $B_w = \begin{bmatrix}
    0 & m_1^{-1}
\end{bmatrix}^\T$. The disturbance model is given by $\R = 1$, $\sigma = 1$. We choose $\W$=diag$(100,1,\dots,1)$ to illustrate the allocator ability, which means that we want to penalize the use of the first actuator. By running optimisation problem~\eqref{eq:opt3}\footnote{To enlarge the region of stability in the direction of the first plant state, representing the distance between the satellites, we used a small modification in \eqref{eq:opt3} by substitution of $\Pp_0 \preceq \lambda \Imm$ by $\begin{bmatrix}
1 &0_{1 \times(n-1)}
\end{bmatrix} \Pp_0 \begin{bmatrix}
1 &0_{1 \times(n-1)}
\end{bmatrix}^{\T} \preceq \lambda$.} with $\rho_1=2$, $\rho_2=0.15$, and $\rho_3=1000$, we obtained the gains $\K_f$, $\Ematrix_c$, and $\Ematrix_f$ below

\begin{equation}\label{eq:Efex2}
   \left[
\begin{array}{c}
    \Ematrix_c \\  \hline
    \Ematrix_f
\end{array}
\right] =
\left[
\setlength\arraycolsep{5pt}\def\arraystretch{0.92}
\begin{array}{cccccccc}
  -0.1708 &  -0.0059 &  -0.0032  & -0.0053 &   0.0111 &  -0.0066 &  -0.0066  & 0.0066\\
   -0.0895  & -0.0118  & -0.0103 &   0.0057 &  -0.0024 &   0.0048  &  0.0048  & -0.0048\\
\hline
  -0.0480 &   -0.0103 &    0.1031  &  -0.0662  &   0.0747  &   0.1042   &  0.1042 &   -0.1042\\
   -0.0805  &   0.0057  &  -0.0661  &   0.0225 &   -0.0512  &  -0.0537  &  -0.0537  &   0.0537\\
    0.1277 &   -0.0018  &   0.0566 &  -0.0389 &   -0.0080  &   0.0722   &  0.0722 &   -0.0722\\
   -0.0757  &   0.0037  &   0.0790 &   -0.0408  &   0.0722 &    0.9615  &  -0.3662 &    0.3662\\
   -0.0757  &   0.0037  &   0.0790 &   -0.0408   &  0.0722 &   -0.3662  &   0.9615  &   0.3662\\
    0.0757  &  -0.0037  &  -0.0790 &    0.0408 &   -0.0722  &   0.3662  &   0.3662  &   0.9615\\
\end{array}
\right],
\end{equation}
\begin{equation}\label{eq:Afex2}
  \Ku_f =
\left[
\setlength\arraycolsep{2.3pt}\def\arraystretch{0.92}
\begin{array}{cccccc}
  -1.8960  &   0.9476  & -0.9437  &  0.0053  &  0.0053  & -0.0053 \\
    0.9103  & -1.8750  & -0.9090 &   0.0013  &  0.0013 &  -0.0013 \\
   -0.8741  & -0.8266  & -1.8904  & -0.0007 &  -0.0007  &  0.0007 \\
   -0.0020 &  -0.0009  &  0.0118 &  -1.6716   & 0.5010 &  -0.5010 \\
   -0.0020 &  -0.0009  &  0.0118   & 0.5010 &  -1.6716  & -0.5010 \\ 
    0.0020  &  0.0009 &  -0.0118 &  -0.5010  & -0.5010 & -1.6716
\end{array}
\right]
\end{equation}

We performed simulation with initial condition $x_p(0)=\left[ \begin{array}{cc}
     -0.18  & 0
\end{array}\right]^{\T}$, with $x_c(0) = 0$ and $x_f(0) =0$, and disturbance $w$ given by a function
\begin{equation*}
    w(t) = \begin{cases}
        0.1667, 0\leq t \leq 36 \\
        0, t > 36
    \end{cases}
\end{equation*}
The results are illustrated in Fig.~\ref{ex3:output}. To illustrate the ability of the dynamic allocator to redistribute the control effort according to the chosen $\W$ matrix, two cases are plotted: dynamic allocator ($\mathcal{F}$ defined in \eqref{eq:alloc}) plus anti-windup gain ($\Ematrix_c$) and static allocator ($\mathcal{F}=\Mp$) plus anti-windup gain ($\Ematrix_c$). Both strategies stabilize the system, however, it can be observed that the dynamic allocation successfully reduces the usage of the penalized actuator even in the presence of disturbances.

\begin{figure}[!hhh]
\center{\includegraphics[width=0.8\linewidth,angle=0]{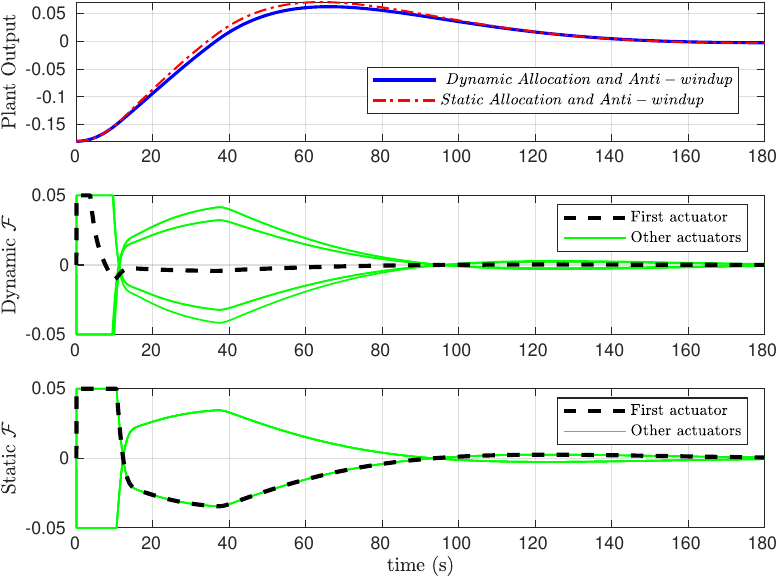}
\caption{Example 2: Output and actuators - external disturbance case.}
\label{ex3:output}}
\end{figure}

\subsection{Example 2 - Robust case}
To illustrate the results from Section~\ref{sec:robust}, we revisit again the satellite formation problem by considering the uncertain influence matrix given by $\M(\theta) = \M_n + \M_u(\theta)$, with $\M_n$ previously defined, and $\M_u(\theta) = \theta \begin{bmatrix}
    0 & 1 & 1 & -1 & 0 & 0 & 0 & 0 \\
      0 & 0 & 0 & 0  & 0 & 0 & 0 & 0
\end{bmatrix}$, $\theta \in [0.9,1] \subset \reals$. This influence matrix depicts a scenario in which three out of four thrusters on the first satellite are compromised, having a functional capacity ranging from zero to ten percent of their nominal capacity. The former case occurs when $\theta$ is equal to 1, while the latter occurs when $\theta$ is equal to 0.9.

By running optimisation problem~\eqref{eq:opt8} with $\rho_1=2$, $\rho_2=0.15$, we obtained the gains $\K_f$, $\Ematrix_c$, and $\Ematrix_f$ below

\begin{equation}\label{eq:Efex3}
   \left[
\begin{array}{c}
    \Ematrix_c \\  \hline
    \Ematrix_f
\end{array}
\right] =
\left[
\setlength\arraycolsep{5pt}\def\arraystretch{0.92}
\begin{array}{cccccccc}
 0.0019 &  -0.0000   & 0.0394  & -0.0193 &   0.0325   & -0.0411 &  -0.0411   & 0.0411 \\
   -0.0002 &  -0.0047  &  0.0142  & -0.0043 &   0.0118   & -0.0160  & -0.0160  &  0.0160\\
\hline
 1.2781 &   0.0144   & 0.1663 &  -0.0741 &  0.1006    & 0.1297   & 0.1297 &  -0.1297 \\
   -0.6243  & -0.0044 &  -0.0738  &  0.3141   & 0.2881   &-0.0918  & -0.0918 &   0.0918 \\
    0.7725  &  0.0088 &   0.0736   & 0.2114  &  0.3749    & 0.0720   & 0.0720 &  -0.0720 \\
   -0.9763  & -0.0119  &  0.0949 &  -0.0674 &   0.0721    & 0.9519 &  -0.3357  &  0.3357 \\
   -0.9763 &  -0.0119 &   0.0949 &  -0.0674 &  0.0721   & -0.3357   & 0.9519   & 0.3357 \\
    0.9763  &  0.0119 &  -0.0949 &   0.0674 &  -0.0721  &  0.3357  &  0.3357 &   0.9519
\end{array}
\right],
\end{equation}
\begin{equation}\label{eq:Afex3}
  \Ku_f =
\left[
\setlength\arraycolsep{2.3pt}\def\arraystretch{0.92}
\begin{array}{cccccc}
    -1.1684  &  0.6813  & -0.4766 &   0.0034  &  0.0034 &  -0.0034 \\
    0.7282   &-1.0438  & -0.3054  &  0.0249  &  0.0249  & -0.0249 \\
   -0.4528  & -0.3418  & -0.8017 &   0.0284  &  0.0284  & -0.0284 \\
   -0.0200   & 0.0792  &  0.0584  & -0.8628  &  0.1381  & -0.1381 \\
   -0.0200   & 0.0792  &  0.0584  &  0.1381  & -0.8628  & -0.1381 \\
    0.0200   &-0.0792 &  -0.0584  & -0.1381 &  -0.1381  & -0.8628
\end{array}
\right]
\end{equation}

We performed simulation with initial condition $x_p(0)=\left[ \begin{array}{cc}
     -0.25  & 0
\end{array}\right]^{\T}$, $x_c(0) = 0$, and $x_f(0) =0$ for both the nominal system with dynamic allocator designed using Theorem~1 from~\cite{AlvesLima2021}, and the uncertain case with $\theta = 1$ and co-design using optimisation problem~\eqref{eq:opt8}, i.e., with matrices given in~\eqref{eq:Efex3}-\eqref{eq:Afex3}. The results are illustrated in Fig.~\ref{ex3:robust}. One can see that although the uncertainty has affected the performance of the controller by slowing down the convergence of the output, the allocator is still effective in reducing the usage of the first actuator and minimizing the allocation error $e$.

\begin{figure}[!hhh]
\center{\includegraphics[width=0.8\linewidth,angle=0]{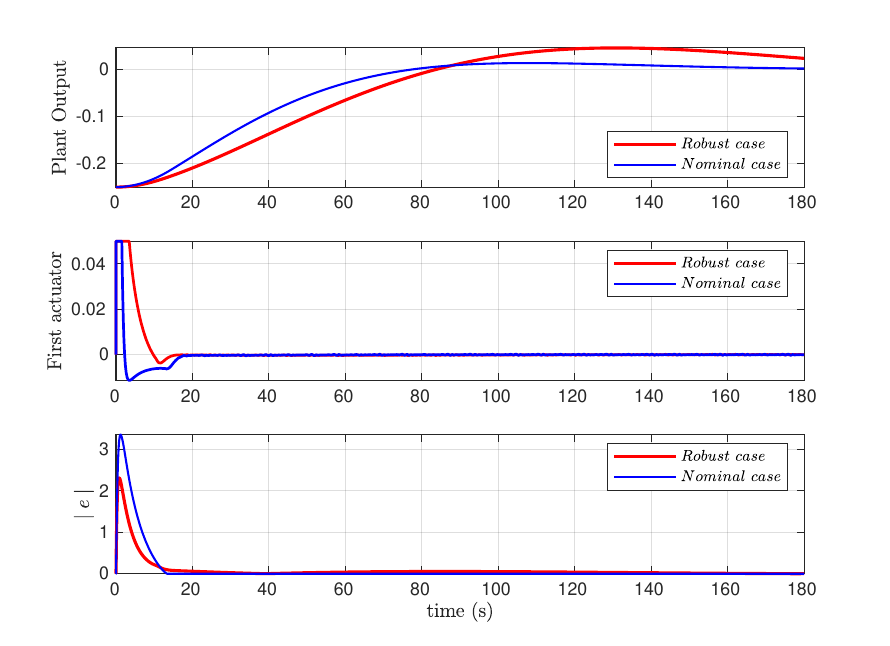}
\caption{Example 3: Output, the first actuator, and the allocation error for the robust case.}
\label{ex3:robust}}
\end{figure}

\section{Conclusion}\label{sec:conclu}
This chapter handled the co-design of dynamic allocation functions along with anti-windup gains to deal with over actuated/input redundant systems. The system  under consideration is subject to saturating actuators and possibly affected by additive bounded disturbance and the presence of uncertainty in the influence matrix. The proposed results can be seen as an extension of the ideas in \cite{ZACCARIAN_2009} and \cite{AlvesLima2021} to a more general scenario. Indeed, the chapter dealt with a much broader spectrum of cases and proposed optimization criteria that allow both to minimize or maximize several things as energy consumption in the actuators, estimations on the region of attraction, and admissible bounds on the disturbance affecting the system. The proposed results pave the way for future developments, as for example, the consideration of other nonlinearities affecting the actuator and event-triggered control. \textcolor{black}{It could be also interesting to further investigate other models for the external disturbance signals, for example by considering those generated by stochastic processes.}

\addcontentsline{toc}{section}{Appendix}

\bibliographystyle{plain}
\bibliography{refs}

\end{document}